\documentclass[a4paper,12pt]{amsart}

\usepackage[utf8]{inputenc}
\usepackage[english]{babel}
\usepackage{a4wide}

\usepackage{amsthm, amssymb}
\usepackage[alphabetic]{amsrefs}
\usepackage{enumitem}
\usepackage{color}

\usepackage[cmtip,arrow]{xy}
\usepackage{pb-diagram, pb-xy}

\usepackage{etoolbox}
\apptocmd{\sloppy}{\hbadness 10000\relax}{}{}

\newtheorem{theorem}{Theorem}[section]
\newtheorem{lemma}[theorem]{Lemma}
\newtheorem{proposition}[theorem]{Proposition}
\newtheorem{cor}[theorem]{Corollary}

\theoremstyle{definition}
\newtheorem{definition}[theorem]{Definition}
\newtheorem{example}[theorem]{Example}
\theoremstyle{remark}
\newtheorem{remark}[theorem]{Remark}

\newcommand{\CC}{\mathbb{C}}
\newcommand{\NN}{\mathbb{N}}
\newcommand{\ZZ}{\mathbb{Z}}
\newcommand{\RR}{\mathbb{R}}

\newcommand{\udisc}{\mathbb{D}}

\newcommand{\aut}{\mathop{\mathrm{Aut}}}
\newcommand{\id}{\mathop{\mathrm{id}}}
\newcommand{\Olo}{\mathcal{O}}
\newcommand{\hol}{\mathcal{O}}
\newcommand{\slgrp}{\mathrm{SL}}

\newcommand{\sball}{\Omega}
\newcommand{\sdisc}{\mathbb{G}}

\makeatletter
\@namedef{subjclassname@2020}{%
  \textup{2020} Mathematics Subject Classification}
\makeatother

\begin{document}

\author{Rafael~B.~Andrist}
\address{Rafael B. Andrist \\ Faculty of Arts and Sciences \\  American University of Beirut \\ Lebanon}
\email{ra332@aub.edu.lb}

\author{Riccardo~Ugolini}
\address{Riccardo~Ugolini \\ Faculty of Mathematics \\ Ruhr University Bochum \\ Germany}
\email{riccardo.ugolini@ruhr-uni-bochum.de}

\subjclass[2020]{Primary: 32M05; Secondary: 32Q56}

\keywords{Holomorphic automorphism, tame set, affine algebraic manifold, complex linear group, homogeneous space}

\title{Tame sets in homogeneous spaces}

\begin{abstract}
We prove the existence of strongly tame sets in affine algebraic homogenenous spaces of linear algebraic Lie groups. We also show that $(\CC^n,A)$ for a discrete tame set enjoy the relative density property, and we provide examples of Stein manifolds admitting non-equivalent tame sets.
\end{abstract}

\maketitle

\section{Introduction}

The notion of a \emph{tame set} in $\CC^n$ was introduced by Rosay and Rudin in their seminal paper \cite{RosayRudin} in 1988.

\begin{definition}[\cite{RosayRudin}*{Def.~3.3}]
\label{def-tame-RR}
Let $e_1$ be the first standard basis vector of $\CC^n$. A set $A \subset \CC^n$ is called \emph{tame} (in the sense of Rosay and Rudin) if there exists an $F \in \aut(\CC^n)$ such that $F(A) = \NN \cdot e_1$. It is very tame if we can choose $F \in \aut_1(\CC^n)$ to be volume preserving.
\end{definition}

The significance of tame sets lies in their role as interpolating sets for holomorphic automorphisms. As an example, it is possible to find automorphisms with prescribed derivatives up to any order at every point of a tame set \cite{Forstneric1999}. It is even possible to do so with holomorphic dependence on a Stein parameter, under suitable topological conditions \cite{Ugolini2017}.

Since the definition of Rosay and Rudin makes explicit use of the fact that $\CC^n$ is a vector space, it is natural to ask how the notion of tameness can be generalized to complex manifolds. Independently, Winkelmann \cite{Winkelmann2019} and the authors \cite{AndristUgolini2019} introduced the notion of \emph{(weakly) tame} and \emph{(strongly) tame} in 2019.
 
\begin{definition}\cite{Winkelmann2019}
Let $X$ be a complex manifold.
An infinite discrete
subset $A$ is called \emph{(weakly) tame}
if for every exhaustion function $\rho \colon X \to \RR$
and every function $\zeta \colon A \to \RR$
there exists an automorphism $\Phi$ of $X$
such that $\rho(\Phi(x)) \geq \zeta(x)$ for all $x \in A$.
\end{definition}

\begin{definition}\cite{AndristUgolini2019}
\label{def-tame}
Let $X$ be a complex manifold and let $G \subseteq \aut(X)$ be a subgroup of its group of holomorphic automorphisms. We call a closed discrete infinite set $A \subset X$ a \emph{$G$-tame set} if for every injective mapping $f \colon A \to A$ there exists a holomorphic automorphism $F \in G$ such that $F|_{A} = f$.
\end{definition}

If $G$ is the full group of holomorphic automorphisms  $\aut(X)$ of a complex manifold $X$, we simply call a $G$-tame set a \emph{(strongly) tame} set.
It is easy to see that every strongly tame set is weakly tame, hence justifying the choices of the names. In general, the two notions are different; in fact, the manifold $\udisc \times \CC$ contains weakly tame sets, but not strongly tame ones \cite{Winkelmann2019}*{Proposition 5.3}. However, in $\CC^n$, both notions agree with the definition of Rosay and Rudin, see \cite{Winkelmann2019} and \cite{AndristUgolini2019}.

In the following, we will not considered weakly tame sets, hence \emph{tame} will always refer to \emph{strongly tame}.

It is simple to show that any complex manifold admitting a tame set is endowed with an infinite-dimensional group of holomorphic automorphisms \cite{AndristUgolini2019}*{Remark 7.3}. For this reason, we focus our search of tame sets to \emph{Stein manifolds with the density property}. This class includes all complex Lie groups and their homogeneous spaces of dimension at least $2$, excluding complex tori. They possess a very large group of holomorphic automorphisms; see \cite{FrancBook}*{Chapter 4} for more details and examples.

The authors generalized a classical interpolation result from \cite{RosayRudin} as follows.

\begin{proposition}[\cite{AndristUgolini2019}*{Proposition 2.4}] \label{dpequiv}
Let $X$ be a Stein manifold with the density property and let $A, B \subset X$ be tame sets. Then there exists a holomorphic automorphism $F \colon X \to X$ such that $F(A) = B$.
\end{proposition}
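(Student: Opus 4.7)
The plan is a back-and-forth construction of $F$ as a convergent infinite composition, using the density property to perform finite point interpolations with control on compacts. Fix a normal exhaustion $K_1 \subset K_2 \subset \cdots$ of $X$ by $\Olo(X)$-convex compacts, enumerations $A = \{a_n\}$, $B = \{b_n\}$ in which both sequences eventually leave every $K_n$, and a summable sequence of error bounds. We inductively build $F_n \in \aut(X)$ with $F_n(a_i) = b_i$ for $i \leq n$, taking $F_n = \phi_n \circ F_{n-1}$ where $\phi_n$ fixes the previously matched $b_1, \dots, b_{n-1}$ pointwise and is sufficiently close to the identity on an appropriate compact. The standard Andersén--Lempert convergence scheme, with matching estimates on inverses, then yields a biholomorphism $F \in \aut(X)$ in the limit, and by construction $F(a_n) = b_n$ for every $n$, hence $F(A) = B$.

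The inductive step rests on the following consequence of the density property: given distinct source points $p_1, \dots, p_k$ and distinct target points $q_1, \dots, q_k$ in $X$, an $\Olo(X)$-convex compact $K$ disjoint from both collections, and $\varepsilon > 0$, there exists $\phi \in \aut(X)$ with $\phi(p_i) = q_i$ and $\|\phi - \id\|_K < \varepsilon$. One realizes the interpolation first by a smooth isotopy of biholomorphisms defined on a neighborhood of $K$ together with small disjoint discs around the $p_i$, and then approximates it by global automorphisms via Andersén--Lempert. Applied at stage $n$ with source $F_{n-1}(a_n)$, target $b_n$, and the previously matched $b_1, \dots, b_{n-1}$ added as fixed points, this lemma produces the required $\phi_n$.

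Tameness of $A$ and $B$ enters to resolve the delicate point that the interpolation lemma needs the source $F_{n-1}(a_n)$ to be well separated from the fixed targets; if it is not, the perturbation required could exceed the error budget or even be impossible (in the case of coincidence). Tameness of $A$ supplies, precisely as its defining property, an automorphism of $X$ realizing any desired injection of $A$ into itself, which we precompose with $F_{n-1}$ to shift $a_n$ within $A$ so that its new image is separated from the $b_i$ and from the relevant compact. The symmetric use of tameness of $B$ guarantees, at every stage, the existence of unused targets far from all previous choices. The main obstacle is coordinating these separations with the summability and exhaustion so that both $F_n$ and $F_n^{-1}$ converge uniformly on compacts to an honest automorphism; this is the standard Andersén--Lempert bookkeeping, where each compact $K_n$ is enlarged to absorb the necessary images and preimages contributed by all subsequent stages.
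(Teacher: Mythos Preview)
The paper does not give a proof of this proposition; it is quoted from \cite{AndristUgolini2019}*{Proposition~2.4}, so there is no in-paper argument to compare against. I will therefore comment on the soundness of your outline.

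The overall architecture---finite-point interpolation via the density property, an Anders\'en--Lempert convergence scheme, and a back-and-forth matching---is the right framework. However, your stated mechanism for invoking tameness has a genuine gap. You write that when $F_{n-1}(a_n)$ lands in a bad position (inside the current compact, or coinciding with an already-matched $b_i$), you ``precompose with $F_{n-1}$'' a tameness automorphism $\sigma$ realizing an injection of $A$ into itself, replacing $F_{n-1}$ by $F_{n-1}\circ\sigma$. But the convergence scheme requires $F_n$ to differ from $F_{n-1}$ only by a factor $\phi_n$ that is uniformly small on $K_n$; the automorphism $\sigma$ supplied by the definition of tameness comes with no such control, so inserting it destroys the telescoping estimate that makes $\lim F_n$ exist as an automorphism. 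If instead you only mean to relabel the enumeration of $A$ (swapping $a_n$ for some $a_m$ with $F_{n-1}(a_m)$ in good position), that is not an application of tameness at all, and then you face the opposite problem: nothing forces every element of $A$ to be matched in finitely many steps, so the limit map need not carry $A$ onto $B$.

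What is actually needed---and what the original proof in \cite{AndristUgolini2019} organizes carefully---is a way to use tameness \emph{before} or \emph{alongside} the approximation so that the images of the not-yet-matched points of $A$ are kept outside the growing compacts by finitely many additional interpolation constraints at each stage, rather than by a wholesale precomposition. Your last paragraph gestures at ``standard bookkeeping'', but this is exactly the step where the argument is nonstandard and where tameness does real work; as written, the proof does not close.
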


The authors showed the existence of tame sets in every connected complex-linear algebraic group different from the complex line $\CC$ or the punctured complex line $\CC^{\ast}$ \cite{AndristUgolini2019}.

Winkelmann in \cite{Winkelmann2020} extended the study of weakly tame sets to semi-simple Lie groups without non-trivial group homomorphisms to $\CC^\ast$ and gave several equivalent characterizations of weakly tame sets. In particular, he showed that in semi-simple Lie groups without a non-trivial group homomorphsims to $\CC^\ast$, the notions of tame and weakly tame coincide.

In this paper, we extend the existence result for tame sets to homogeneous spaces. Our main result is the following theorem.

\begin{theorem}
\label{mainthm}
Let $G$ be a complex-linear algebraic group and let $R \subset G$ be a closed reductive subgroup. Then the quotient $G/R$ contains tame subsets unless it consists only of connected components of $\CC$ or $\CC^\ast$.
\end{theorem}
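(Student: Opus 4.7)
The plan is to combine the Andersén--Lempert framework for Stein manifolds with the density property with a tailored construction of a candidate discrete set adapted to the homogeneous structure of $G/R$. By Matsushima's theorem, since $R$ is reductive in a linear algebraic group, the quotient $G/R$ is affine algebraic, hence Stein. A first step is to verify that $G/R$ possesses the density property in the non-exceptional cases; this should follow from the work of Donzelli, Dvorsky and Kaliman on the algebraic density property of affine homogeneous spaces of linear algebraic groups, together with a small case analysis to show that the listed one-dimensional quotients are the only obstructions.

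Next, I would produce an explicit candidate tame set $A\subset G/R$. The natural source is a one-parameter unipotent subgroup of $G$: generic orbits of its action on $G/R$ are properly embedded copies of $\CC$, and one sets $A=\{\gamma(n):n\in\NN\}$ for such an orbit map $\gamma\colon\CC\to G/R$. A delicate point is to ensure that the chosen orbit is in fact \emph{closed} in $G/R$; this may require choosing the base point carefully and possibly combining the unipotent flow with a transverse algebraic direction to separate the points $\gamma(n)$ from each other and from the boundary of the orbit closure. Once such a single tame set is found, Proposition~\ref{dpequiv} guarantees that any other tame discrete set is equivalent to it, so existence is all that is required.

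Tameness of $A$ is verified by a Rosay--Rudin-style iterative approximation. Given an injection $f\colon A\to A$, one constructs an extending automorphism $F\in\aut(G/R)$ as a limit of compositions of time-$1$ maps of complete holomorphic vector fields supplied by the density property. At each step the transitive $G$-action moves finitely many points of $A$ to the prescribed targets, while flows along the unipotent one-parameter subgroup perform fine shifts along $\gamma(\CC)$. The inductive bookkeeping matches that of \cite{RosayRudin}: one handles the points of $A$ one by one, each time perturbing only on a neighbourhood of the active point.

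The main obstacle will be controlling the convergence of this iteration. In $\CC^n$ one exploits translations and shears that fix arbitrarily large portions of $A$ pointwise, but on $G/R$ no such elementary moves are available. Instead one must use that the automorphisms produced by the density property can be arranged to act as the identity on arbitrarily large compact sets up to any prescribed error, and absorb the resulting perturbation of the already-placed points of $A$ into a Cauchy-type estimate. The careful interplay between the exhaustion of $G/R$, the growth rate of $A$ along $\gamma$, and the perturbation estimates from the density property is where the bulk of the technical work is likely to lie.
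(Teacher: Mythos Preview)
Your proposal takes a route fundamentally different from the paper's, and it contains a genuine gap. The paper does \emph{not} use the density property of $G/R$ or any Anders\'en--Lempert approximation to produce the tame set. Instead it reduces, via Mostow's decomposition and the structure theory in Donzelli--Dvorsky--Kaliman, to situations where either Theorem~\ref{commuting} applies (two commuting algebraic complete fields with incomparable kernels), or where an $\slgrp_2(\CC)$-subgroup acts on $G/R$ with a closed orbit. That orbit is then one of a short explicit list ($\slgrp_2(\CC)$, $\slgrp_2(\CC)/\CC^\ast$, or a finite quotient thereof), and in each model the tame set is the image of $\NN$ under the upper-triangular unipotent, with the interpolating automorphism built as a \emph{finite} composition of shears of the three standard $\slalg_2$ vector fields. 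The shear functions are chosen by one-variable Mittag-Leffler interpolation. Corollary~\ref{larger} (via Theorem~\ref{findshears}) then extends each shear from the closed orbit to all of $G/R$, so the same finite composition works in the ambient space.

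The gap in your plan is the passage from approximation to interpolation. The density property supplies automorphisms close to a given isotopy on compacts, but tameness requires that \emph{every} injection $f\colon A\to A$ extend to an automorphism $F$ with $F|_A=f$ \emph{exactly}. Your iterative scheme, as described, places each $a_n$ only approximately and then hopes to ``absorb the resulting perturbation into a Cauchy-type estimate''; that would at best make the composed sequence converge uniformly on compacta, not force the limit to hit countably many prescribed targets on the nose. Closing this gap would require automorphisms that fix the already-placed points of $A$ exactly while still approximating on compacts---essentially a relative density property for $(G/R,A)$---which is precisely what the paper proves only for $(\CC^n,A)$ in Section~\ref{secrelative} and which is not available for general affine homogeneous spaces. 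The paper avoids the issue altogether: every interpolating automorphism is a finite product of explicit shears, so no limit is taken and exactness is automatic.
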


Note that according to Matsushima's criterion \cite{Matsushima}, a quotient of a complex-linear algebraic group by a closed subgroup is affine if and only if the subgroup is reductive.

\bigskip

We will first construct tame sets in $\slgrp_2(\CC)$ and $\slgrp_2(\CC)/\CC^\ast$; this is the content of Section \ref{sl2case}.
In Section \ref{homtame}, we will use the structure theory of Lie groups to reduce to the case of a simple Lie group and then to $\slgrp_2(\CC)$, $\slgrp_2(\CC)/\CC^\ast$, or one of their finite quotients.
After proving our main Theorem \ref{mainthm}, we provide an example of a manifold admitting tame sets which cannot be mapped into each other via an automorphism, see Section \ref{nonequivtame}.
In Section \ref{secrelative} we prove that $(\CC^n, A)$ has the relative density property for a tame subset $A \subset \CC^n$. This provides another affirmative example to the problem mentioned in \cite{FrancBook}*{Problem 4.11.1} and \cite{FrancFrank}*{Section 2.3}.

\bigskip

We will repeatedly use complete holomorphic vector fields and their flows. Let us recall the following notions and facts.

\begin{remark}
Let $X$ be a complex manifold and let $V$ be a holomorphic vector field on $X$. We can interpret $V$ as a $\CC$-linear derivation $V \colon \hol(X) \to \hol(X)$. 
\begin{enumerate}
\item A vector field $V$ is called \emph{complete}, if its flow map $\varphi_t$ exists for all times $t \in \CC$. Since $\varphi_t$ satisfies the semi-group property, every complete holomorphic vector field generates a one-parameter family of holomorphic automorphisms $\varphi_t \colon X \to X$.
\item If $f \in \ker V$ and if $V$ is complete, then $f \cdot V$ is complete. We call $f \cdot V$ a \emph{shear} of $V$. If $V$ was volume-preserving\footnote{We call a vector field $V$ volume-preserving if $X$ is equipped with \emph{holomorphic volume form} $\omega$, i.e.\ $\omega$ is a nowhere degenerate, holomorphic form of maximal rank on $X$, and if $\mathrm{div}_\omega(V) = 0$.}, then so is $f \cdot V$.
\end{enumerate}
\end{remark}

\section{The group $\slgrp_2(\CC)$ and its homogeneous spaces} \label{sl2case}

According to Mostow's decomposition, an algebraic group can be decomposed into semi-direct product a unipotent group and a reductive group which further decomposes into a semi-simple and toric part. It is therefore essential to understand the semi-simple part and its building blocks, namely $\slgrp_2(\CC)$. In this section we study $\slgrp_2(\CC)$ and its homogeneous spaces.

We quote from Donzelli--Dvorsky--Kaliman \cite{DDK2010}*{Lemma 13}:
\begin{lemma}
Let $X$ be a smooth complex affine algebraic variety equipped with
a fixed point free non-degenerate $\slgrp_2(\CC)$-action and let $x \in X$ be a point contained in a closed $\slgrp_2(\CC)$-orbit. Then the isotropy group of $x$ is either finite, or isomorphic to the diagonal $\CC^\ast$-subgroup of $\slgrp_2(\CC)$, or to the normalizer of this $\CC^\ast$-subgroup (which is the extension of $\CC^\ast$ by $\mathbb{Z}_2$).
\end{lemma}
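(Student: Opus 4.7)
The plan is to combine Matsushima's criterion with the elementary classification of reductive subgroups of $\slgrp_2(\CC)$. Since the orbit $\slgrp_2(\CC)\cdot x$ is closed in the affine variety $X$, it is itself an affine variety; identifying this orbit with the homogeneous space $\slgrp_2(\CC)/H$, where $H$ denotes the isotropy of $x$, the Matsushima criterion mentioned earlier in the introduction forces $H$ to be a closed reductive algebraic subgroup of $\slgrp_2(\CC)$.

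It then remains to enumerate the closed reductive subgroups of $\slgrp_2(\CC)$ and discard those incompatible with the hypotheses. As $\dim \slgrp_2(\CC) = 3$, the isotropy $H$ has dimension $0$, $1$, $2$, or $3$. The case $\dim H = 3$ would force $H = \slgrp_2(\CC)$ and therefore render $x$ a fixed point, which is excluded. There is no reductive subgroup of dimension $2$, since every connected $2$-dimensional subgroup of $\slgrp_2(\CC)$ is conjugate to the upper-triangular Borel and thus has nontrivial unipotent radical. Dimension $0$ produces the finite case of the lemma.

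Finally, suppose $\dim H = 1$. The identity component $H^\circ$ is a connected reductive group of dimension $1$, hence an algebraic torus, and since all maximal tori of $\slgrp_2(\CC)$ are conjugate, we may assume after a conjugation that $H^\circ$ is the diagonal subgroup $T \cong \CC^\ast$. Because $H^\circ$ is characteristic in $H$, conjugation by elements of $H$ preserves $T$, so $H \subseteq N_{\slgrp_2(\CC)}(T)$; combined with the well-known fact that the Weyl group $N(T)/T$ has order two, the only possibilities are $H = T$ or $H = N(T)$, the latter being an extension of $\CC^\ast$ by $\ZZ_2$. Assembling the three cases gives the stated trichotomy.

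I do not anticipate a genuine obstacle: the argument is a direct application of Matsushima's criterion together with the standard classification of algebraic subgroups of $\slgrp_2(\CC)$. The \emph{non-degenerate} hypothesis from the statement does not appear to be needed for this conclusion once the fixed-point-free assumption has been used to rule out $H = \slgrp_2(\CC)$; it is presumably retained for compatibility with other results in \cite{DDK2010}.
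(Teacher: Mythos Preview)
The paper does not supply a proof of this lemma: it is quoted verbatim from \cite{DDK2010}*{Lemma~13}, so there is no in-paper argument to compare against. Your proof is correct and is the standard one---Matsushima's criterion forces the isotropy of a closed orbit to be reductive, and the enumeration of reductive subgroups of $\slgrp_2(\CC)$ by dimension is exactly as you describe. Your observation that the non-degeneracy hypothesis is not used here (only the fixed-point-free assumption, to exclude $H=\slgrp_2(\CC)$) is also correct; that hypothesis is carried along in \cite{DDK2010} for the surrounding results.
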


Note that the quotient by the normalizer mentioned above is isomorphic to the quotient of the Danielewski hypersurface $\{z^2 -z -xy = 0\}$ obtained by identifying $(x,y,z)$ with $(-x,-y,-z+1)$; see also \cite{DDK2010}*{Section 3.1}.
Since the Danielewski hypersurface is isomorphic to the quotient $\slgrp_2(\CC) / \CC^\ast$, we only need to consider finite quotients of $\slgrp_2(\CC)$ and of $\slgrp_2(\CC) / \CC^\ast$.

Before proceeding, let us recall the following well-known result; an elementary proof can be found in the textbook of Harris \cite{Harris}*{page 124}.
\begin{lemma} \label{affquotient}
Let $X$ be an affine algebraic variety with a finite group $E$ acting by algebraic automorphisms. Then the quotient $X / E$ is an affine algebraic variety as well.
\end{lemma}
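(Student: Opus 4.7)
The plan is to reduce the question to a standard statement in commutative algebra about invariant rings. Write $X = \mathrm{Spec}(A)$, where $A = \Olo(X)$ is a finitely generated, reduced $\CC$-algebra, and observe that the algebraic action of $E$ on $X$ induces an action of $E$ on $A$ by $\CC$-algebra automorphisms. I would define the candidate coordinate ring of the quotient to be the invariant subring
\[
A^{E} = \{ f \in A : e \cdot f = f \text{ for all } e \in E \},
\]
and aim to prove that $A^{E}$ is a finitely generated $\CC$-algebra; once this is achieved, setting $X/E := \mathrm{Spec}(A^{E})$ yields an affine algebraic variety, and the inclusion $A^{E} \hookrightarrow A$ realizes the geometric quotient morphism $X \to X/E$.

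First I would note that every $a \in A$ is integral over $A^{E}$: the monic polynomial
\[
P_{a}(T) = \prod_{e \in E} (T - e \cdot a)
\]
has all coefficients in $A^{E}$, being $E$-symmetric in the orbit of $a$, and clearly vanishes at $a$. Next, pick a finite set $a_{1}, \ldots, a_{n}$ of $\CC$-algebra generators of $A$, and let $b_{1}, \ldots, b_{m}$ be all the coefficients of the polynomials $P_{a_{1}}, \ldots, P_{a_{n}}$. These lie in $A^{E}$, so setting $B = \CC[b_{1}, \ldots, b_{m}] \subseteq A^{E}$ gives a finitely generated, hence Noetherian, $\CC$-algebra.

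By construction each generator $a_{i}$ of $A$ is integral over $B$, so $A$ is a finitely generated $B$-module. As $B$ is Noetherian, any $B$-submodule of $A$, in particular $A^{E}$, is again finitely generated over $B$; composing a finite module extension with a finite algebra extension yields that $A^{E}$ is a finitely generated $\CC$-algebra, completing the reduction. The only place where care is needed is the passage from integral dependence of individual elements to finite generation of the whole invariant ring, and the Noetherian descent via the auxiliary ring $B$ is the classical device that overcomes it; the argument is purely characteristic-free because the group $E$ is finite. Finally, I would verify that the maximal ideals of $A^{E}$ are in bijection with the $E$-orbits of maximal ideals of $A$, using a short going-up argument based on the integrality established above, so that $\mathrm{Spec}(A^{E})$ really is the set-theoretic quotient $X/E$ endowed with an affine algebraic structure.
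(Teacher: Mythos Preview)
Your argument is correct: it is Noether's classical proof that the invariant ring of a finite group acting on a finitely generated algebra is again finitely generated, and the steps (integrality via the orbit polynomial, then Noetherian descent through the auxiliary subring $B$) are laid out accurately. Note, however, that the paper does not supply its own proof of this lemma; it simply cites Harris \cite{Harris}*{page 124}. Harris's treatment leans on the Reynolds (averaging) operator $f \mapsto \frac{1}{\#E}\sum_{e\in E} e\cdot f$, available because $\#E$ is invertible in $\CC$, to produce an $A^E$-linear retraction $A \to A^E$; this shows directly that $A^E$ is a summand of the finite $A^E$-module $A$, hence finitely generated as an $A^E$-module, and then the same Noetherian descent finishes. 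Your route avoids the Reynolds operator entirely and is therefore characteristic-free, a mild bonus irrelevant here since the paper works over $\CC$. Both approaches are standard and equally acceptable for this well-known result.
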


\subsection{The group $\slgrp_2(\CC)$ and its finite quotients}

The existence of tame sets in $\slgrp_2(\CC)$ was established in \cite{AndristUgolini2019} where the proof makes use of complete holomorphic vector fields arising from left and right multiplication alike. In order to prove existence of tame sets in homogeneous spaces, this is not sufficient; we need to find tame sets in $\slgrp_2(\CC)$ using only automorphisms originating from left multiplication.

We will use the following three complete vector fields that are the standard generators of the Lie algebra $\mathfrak{sl}_{2}(\mathbb{C})$ and that correspond to left-multiplications:
\begin{align*}
V
&= c \frac{\partial }{\partial a} + d \frac{\partial }{\partial b}
\\
W
&= a \frac{\partial }{\partial c} + b \frac{\partial }{\partial d}
\\
U
&= -a \frac{\partial }{\partial a} -b \frac{\partial }{\partial b}
+c \frac{\partial }{\partial c} +d \frac{\partial }{\partial d}
\end{align*}
satisfying $[V, W] = U$, $[U,V] = 2V$ and $[U, W] = 2W$, with their
respective flows
\begin{align*}
\varphi _{V}^{t}
&=
\begin{pmatrix}
1 & t
\\
0 & 1
\end{pmatrix}
\cdot
\begin{pmatrix}
a & b
\\
c & d
\end{pmatrix}
&=
\begin{pmatrix}
c t+a & d t+b\\
c & d
\end{pmatrix}
\\
\varphi _{W}^{t} &=
\begin{pmatrix}
1 & 0
\\
t & 1
\end{pmatrix}
\cdot
\begin{pmatrix}
a & b
\\
c & d
\end{pmatrix}
&=
\begin{pmatrix}
a & b\\
a t+c & b t+d
\end{pmatrix}
\\
\varphi _{U}^{t} &=
\begin{pmatrix}
e^{-t} & 0
\\
0 & e^{t}
\end{pmatrix}
\cdot
\begin{pmatrix}
a & b
\\
c & d
\end{pmatrix}
&=
\begin{pmatrix}
a\, {e}^{-t} & b\, {e}^{-t}\\
c\, {e}^{t} & d\, {e}^{t}
\end{pmatrix}
\end{align*}

We also have the following relations for the kernels:
\begin{align*}
\ker {V}
&\supseteq \mathbb{C}[c, d]
\\
\ker {W}
&\supseteq \mathbb{C}[a, b]
\\
\ker {U}
&\supseteq \mathbb{C}\left<ac, ad, bc, bd\right>
\end{align*}

Note that $ad - bc = 1$ is contained in all the kernels.

As we already mentioned, we will work with shears of these vector fields. If $f \colon \slgrp_2(\CC) \to \CC$ is a holomorphic function in the kernel of $V$, then $f \cdot V$ is also a complete vector field and its time-$1$ flow is given by
\[
\varphi^f_V = \begin{pmatrix}
1 & f
\\
0 & 1
\end{pmatrix}
\cdot
\begin{pmatrix}
a & b
\\
c & d
\end{pmatrix}.
\]

Similar formulas hold for $W$ and $U$ as well.

\begin{proposition}
\label{prop:sl2tame}
The set $\left\{ \begin{pmatrix}
1 & n \\
0 & 1 
\end{pmatrix}\right\}_{n \in \NN} \subset \slgrp_2(\CC)$ is $G$-tame, where $G \subset \aut(\slgrp_2(\CC))$ is the group generated by the flows of $V, W, U$ and their shears.
\end{proposition}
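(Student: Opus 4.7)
The plan is, given an injection $\sigma \colon \NN \to \NN$, to produce a single automorphism $F \in G$ realizing $p_n \mapsto p_{\sigma(n)}$ as the three-fold composition
\[
F \;=\; \varphi_U^{h_3} \circ \varphi_V^{f} \circ \varphi_U^{h_1}.
\]
The obstruction to using a single $V$-shear is that every $p_n \in A$ has $(c,d) = (0,1)$, so $\varphi_V^{f(c,d)}$ can only add the constant $f(0,1)$ to every $b$-coordinate. The decisive observation is that $bd \in \ker U$ takes the value $n$ at $p_n$, so a preliminary $U$-shear with coefficient $bd$ spreads the points of $A$ into positions with distinct $(c,d)$-coordinates; a $V$-shear then implements the required $n$-dependent shift, and a final $U$-shear restores the normalization $a = d = 1$.

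Explicitly, take $h_1 := bd \in \ker U$, so that $\varphi_U^{h_1}(p_n) = \mathrm{diag}(e^{-n}, e^n)\, p_n$ has $(c,d)=(0,e^n)$. Choose $f \in \ker V$ to depend only on $d$ and to satisfy $f = (\sigma(n)-n)\, e^{-2n}$ at $d = e^n$; such an $f$ exists by Weierstrass interpolation on the discrete set $\{e^n : n \in \NN\} \subset \CC$. A direct matrix computation yields
\[
\varphi_V^{f} \circ \varphi_U^{h_1}(p_n) = \begin{pmatrix} e^{-n} & n e^{-n} + f(e^n)\, e^n \\ 0 & e^n \end{pmatrix},
\]
whose $bd$-coordinate evaluates to $n + f(e^n)\, e^{2n} = \sigma(n)$. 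Finally take $h_3 \in \ker U$ to be a holomorphic function of $bd$ alone with $h_3 = -n$ at $bd = \sigma(n)$; since $\sigma(\NN) \subset \NN$ is discrete in $\CC$, Weierstrass interpolation again supplies such an $h_3$. Then $h_3$ evaluates to $-n$ on the intermediate image of $p_n$, and applying $\varphi_U^{h_3}$ rescales $(a,b) \mapsto (e^n a, e^n b)$ and $(c,d) \mapsto (e^{-n} c, e^{-n} d)$, delivering exactly $p_{\sigma(n)}$.

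The main obstacle is conceptual rather than technical: one must identify a short composition of shears whose combined action preserves $A$ setwise while realizing $\sigma$. The naive attempt with only $V$-shears fails for symmetry reasons, and the crucial insight is twofold: first, that $bd \in \ker U$ separates the points of $A$ and so carries the $n$-dependence through a $U$-shear; and second, that after the intermediate $V$-shear the $bd$-coordinate is pulled exactly onto $\sigma(n)$, which makes the final $U$-shear coefficient specifiable by Weierstrass interpolation on the discrete set $\sigma(\NN) \subset \CC$. Once this design is in place, the verification reduces to a brief $2 \times 2$ matrix calculation and two standard Weierstrass interpolations.
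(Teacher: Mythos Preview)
Your proof is correct and takes a genuinely different route from the paper's. The paper realizes the interpolating automorphism as a four-step composition
\[
\varphi_W^{g}\circ\varphi_U^{\log\mu}\circ\varphi_V^{f}\circ\varphi_W^{a+b},
\]
first using a $W$-shear to spread $p_n$ to matrices with $c=n+1$, then interpolating $f=f(c)$ at integer nodes, and then having to check by hand that the intermediate images $P(n)$ have injective and non-accumulating $(bc,bd)$-values before the $U$- and $W$-shears can be specified. Your three-step composition $\varphi_U^{h_3}\circ\varphi_V^{f}\circ\varphi_U^{bd}$ is shorter, uses only $V$ and $U$, and sidesteps that verification entirely: the polynomial invariant $bd$ already separates the $p_n$, so the interpolation nodes $\{e^n\}$ for $f$ and $\sigma(\NN)$ for $h_3$ are manifestly closed and discrete. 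The only mild trade-off is that your intermediate matrices carry transcendental entries $e^{\pm n}$ rather than rational functions of $n$; this is irrelevant for the present statement, though the paper's more algebraic intermediate form dovetails a bit more directly with the finite-quotient discussion in Remark~\ref{sl2finite}. Your argument adapts to that setting just as well, since all your maps are left multiplications and the averaging trick applies to $f$ and $h_3$ unchanged.
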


\begin{proof}
Given any injection $\ell \colon \NN \to \NN$ we need to show that we can find a holomorphic automorphism $F \colon \slgrp_2(\CC) \to \slgrp_2(\CC)$ with $F\left( \begin{pmatrix}
1 & n \\
0 & 1 
\end{pmatrix} \right) = \begin{pmatrix}
1 & \ell(n) \\
0 & 1 
\end{pmatrix}$.

We make the following ansatz for $F$:
\[
\begin{pmatrix}
1 & n \\
0 & 1 
\end{pmatrix}
\mapsto
\begin{pmatrix}
1 & 0 \\
g & 1
\end{pmatrix}
\cdot
\begin{pmatrix}
\mu^{-1} & 0 \\
0 & \mu
\end{pmatrix}
\cdot
\begin{pmatrix}
1 & f \\
0 & 1
\end{pmatrix}
\cdot
\begin{pmatrix}
1 & 0 \\
n+1 & 1 
\end{pmatrix}
\cdot
\begin{pmatrix}
1 & n \\
0 & 1 
\end{pmatrix}
=
\begin{pmatrix}
1 & \ell(n) \\ 0 & 1
\end{pmatrix}.
\]

The functions $f, g$ and $\mu$ will be in the kernels of the respective vector fields.

For the first two multiplications we obtain
\[
\begin{split}
P(n) := 
\begin{pmatrix} 1 & f(n) \\ 0 & 1 \end{pmatrix} \cdot
\begin{pmatrix} 1 & n \\ n+1 & n \cdot \left( 1+n\right) +1 \end{pmatrix}
\\ =
\begin{pmatrix} f(n) \cdot \left( 1+n\right) +1 & f(n) \cdot \left( 1+n\cdot \left( n+1\right) \right) +n\\\
 n+1 & n\cdot \left( 1+n\right) +1 \end{pmatrix}
\end{split}
\]
We can find an interpolating holomorphic $f = f(c)$ with the following values for $n \in \NN$:
\[
f(n+1) = \frac{n-\ell(n)}{(1+n) \cdot \ell(n)-n^2-n-1}
\]
and obtain
\[
P(n) = 
\begin{pmatrix}
\frac{1}{{{n}^{2}}-\ell(n) n+n-\ell(n)+1} & \frac{\ell(n)}{{{n}^{2}}-\ell(n) n+n-\ell(n)+1}\\[2mm]
n+1 & {{n}^{2}}+n+1
\end{pmatrix}
\]
This would only pose a problem if $\ell(n) = \frac{1+n+{{n}^{2}}}{n+1} \in \NN$, which occurs only if $(n+1)$ would divide $n^2$, but this never happens.

In order to find $\mu$, we need first to recover $n$ from the functions $bc$ and $bd$. For all $n \in \NN$ and prescribed $\ell(n) \in \NN$ we observe that the map to $\CC^2$ given by $(a,b,c,d) \mapsto (bc,bd)$ is injective on $P(\NN)$, since $bd/bc = n + \frac{1}{n+1}$ is injective for $n \in \NN$.
Moreover, $(a,b,c,d) \mapsto (bc,bd)$
has no accumulation points for $(a,b,c,d) \in P(\NN)$:
note first that since $\ell \colon \NN \to \NN$ is an injective map, it is unbounded on any subsequence. Then, $bd$ tends to $\infty$ as $n \to \infty$. To see this, consider
\[
\frac{1}{bd} = \frac{n^2 + n + 1 - \ell(n) \cdot (n + 1)}{\ell(n) \cdot (n^2 + n + 1)} = \frac{1}{\ell(n)} - \frac{n + 1}{n^2 + n + 1} \to 0
\]
Hence we can recover $n$ from $P(n)$ as a holomorphic function of $bc$ and $bd$.

Then, we choose $\mu(n) = f(n) \cdot \left( 1+n\right) +1$ and obtain:
\[
\begin{pmatrix}
\mu^{-1} & 0 \\
0 & \mu
\end{pmatrix}
\cdot P(n) = \begin{pmatrix}1 & \frac{n+f(n) \cdot \left( n\cdot \left( 1+n\right) +1\right) }{f(n) \cdot \left( 1+n\right) +1} \\[2mm]
 \left( n+1\right) \cdot \left( f(n) \cdot \left( 1+n\right) +1\right)  & \left( f(n) \cdot \left( 1+n\right) +1\right) \cdot \left( n\cdot \left( 1+n\right) +1\right) \end{pmatrix}
\]
 
By injectivity of $\ell$ we can find now a holomorphic $g$ with 
\[
g(n) = -\left( n+1\right) \cdot \left( f(n) \cdot \left( 1+n\right) +1\right).
\]
It is now trivial to compute the last matrix multiplication and verify the claim.
\end{proof}

\begin{remark} \label{sl2finite}
We now explain how to modify the proof in case of a finite quotient of $X=\slgrp_2(\CC)$.

Suppose $A \subset X$ is a tame set that was constructed only using a finite composition of complete flows $\varphi^t_U, \varphi^t_V, \varphi^t_W$ as in Proposition \ref{prop:sl2tame}. We want to show that $\pi(A)$ is a tame set in $X / E$, where $E$ is a finite group and $\pi \colon X \to X / E$ is the quotient projection.  

It is clear that $\pi(A)$ is an countable set, and that it is closed and discrete.
Let $\ell \colon \pi(A) \to \pi(A)$ be any permutation.
Let $A = \left\{ \begin{pmatrix} 1 & n \\ 0 & 1 \end{pmatrix} \,:\, n \in \NN \right\}$. For each element of $\pi(A)$ we pick one of the finitely many representatives. We obtain a sequence $(n_m)_{m \in \NN}$ such that $A^\prime = \left\{ \begin{pmatrix} 1 & n_m \\ 0 & 1 \end{pmatrix} \,:\, m \in \NN \right\}$ is mapped bijectively to $\pi(A)$ by $\pi$.
We follow now the proof of Proposition \ref{prop:sl2tame}, except that we replace $\ell \colon \NN \to \NN$ by $\ell \colon \{n_m \,:\, m \in \NN\} \to \{n_m \,:\, m \in \NN\}$ and use the corresponding vector fields and flows on the quotient $X / E$. Note that these are well-defined as they come from left multiplication.

The functions that have to be interpolated are chosen on the quotient, which is affine by Lemma \ref{affquotient}. In practice, we will need $f,g$, and $\mu$ to be $E$-invariant. Let us explain how to obtain such invariant $f \in \ker V$; $g$ and $\mu$ are obtained similarly.

Given a function $\tilde{f} \in \ker V$, we can define the average
\[
f(x) := \frac{1}{\# E} \sum_{e \in E} \tilde{f}(x \cdot e)
\]
as it is done e.g.\ in Harris \cite{Harris}*{page 124}.

The function $f$ is still in $\ker V$ and it is clearly $E$-invariant. To interpolate $f$ on the set $A'$ it is then sufficient to interpolate $\tilde{f}$ on the set $\{ a_{n_m} \cdot e \,:\, m \in \NN, \, e \in E \}$.  If at least two points in the $E$-orbit $A_m = \{ a_{n_m} \cdot e \,:\, e \in E \}$ can be distinguished by functions in $\ker V$ for every $m \in \NN$, it is clear that we can prescribe values to $f$. On the other hand if for some $m \in \NN$ every function in $\ker V$ is constant on the set $A_m$, then $f(a_m \cdot e) = \tilde{f}(a_m)$ and it is sufficient to interpolate at just one point.
\end{remark}

\subsection{The homogeneous space $\slgrp_2(\CC) / \CC^\ast$ and its finite quotients}

Here, we consider the homogeneous space $\slgrp_2(\CC) / \CC^\ast$ and prove that it admits tame sets.

The standard embedding of $\CC^\ast$ in $\slgrp_2(\CC)$ is given by
$\CC^\ast \ni \lambda \mapsto \begin{pmatrix} \lambda & 0 \\ 0 & \lambda^{-1} \end{pmatrix} \in \slgrp_2(\CC)$.

Given the group multiplication $ \begin{pmatrix} a & b \\ c & d \end{pmatrix}  \cdot \begin{pmatrix} \lambda & 0 \\ 0 & \lambda^{-1} \end{pmatrix}  =  \begin{pmatrix} \lambda a & \lambda^{-1} b \\ \lambda c & \lambda^{-1} d \end{pmatrix} $
it is easy to deduce the invariant functions
$ z := a d, 
x := a b, 
y := c d, 
w := b c = z - 1 $
and the relation
\begin{equation}
\label{definingeq}
z^2 - z - x y = 0
\end{equation}

The left multiplications on $\slgrp_2(\CC)$ descend to the quotient as follows:
\begin{align*}
\begin{pmatrix} 1 & t \\ 0 & 1 \end{pmatrix} \cdot \begin{pmatrix} a & b \\ c & d \end{pmatrix} 
&\rightsquigarrow \begin{pmatrix} z \\ x \\ y \end{pmatrix} \mapsto \begin{pmatrix} z + t y \\ x + t(2 z - 1) + t^2 y \\ y \end{pmatrix}
&\rightsquigarrow y \frac{\partial}{\partial z} + (2 z - 1) \frac{\partial}{\partial x} \\
\begin{pmatrix} 1 & 0 \\ t & 1 \end{pmatrix} \cdot \begin{pmatrix} a & b \\ c & d \end{pmatrix} 
&\rightsquigarrow \begin{pmatrix} z \\ x \\ y \end{pmatrix} \mapsto \begin{pmatrix} z + t x \\ x \\ y + t (2 z - 1) + t^2 x \end{pmatrix}
&\rightsquigarrow x \frac{\partial}{\partial z} + (2 z - 1) \frac{\partial}{\partial y} \\
\begin{pmatrix} e^t & 0 \\ 0 & e^{-t} \end{pmatrix} \cdot \begin{pmatrix} a & b \\ c & d \end{pmatrix} 
&\rightsquigarrow \begin{pmatrix} z \\ x \\ y \end{pmatrix} \mapsto \begin{pmatrix} z \\ e^{2 t} x \\ e^{-2t} y \end{pmatrix}
&\rightsquigarrow 2 \frac{\partial}{\partial x} - 2 \frac{\partial}{\partial y}
\end{align*}
These are the well-known ``basic'' complete algebraic vector fields on the Danielewski surface $\{(x,y,z) \in \CC^3 \,:\, z^2 - z - x y = 0 \}$. They are in particular well-defined vector fields which annihilate the outer differential of \eqref{definingeq}.

It is straightforward that $y^k \in \ker y \frac{\partial}{\partial z} + (2 z - 1) \frac{\partial}{\partial x}$ and that $x^m \in \ker x \frac{\partial}{\partial z} + (2 z - 1) \frac{\partial}{\partial y}$ for all $k, m \in \NN_0$. Moreover, $z y^k$ resp.\ $z x^m$ are in the second kernels. 

\begin{proposition}
\label{prop:tamdani}
A tame set in the Danielewski surface $\{(x,y,z) \in \CC^3 \,:\, z^2 - z - x y = 0 \}$ is given by $A = \{ (n, 0, 1) \, : \, n \in \NN \}$ 
\end{proposition}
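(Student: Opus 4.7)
My plan is to translate the argument of Proposition \ref{prop:sl2tame} to the Danielewski surface. Since the vector fields $V$, $W$, $U$ on $\slgrp_2(\CC)$ arise from left multiplications and hence commute with the right $\CC^\ast$-action, their flows, and shears by $\CC^\ast$-invariant functions in the respective kernels, descend to well-defined automorphisms of $\slgrp_2(\CC)/\CC^\ast$. The shear coefficients on the quotient must be drawn from the intersections of the kernels with the $\CC^\ast$-invariant ring, namely $\CC[y] \subseteq \ker V'$, $\CC[x] \subseteq \ker W'$, and $\CC[z] \subseteq \ker U'$. Since the points $(n,0,1)$ are the images under the quotient projection of the matrices $\begin{pmatrix} 1 & n \\ 0 & 1 \end{pmatrix}$, the ansatz of Proposition \ref{prop:sl2tame}, a composition of four left multiplications, descends as soon as its coefficients are rewritten in terms of $x$, $y$, and $z$.

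Given an injection $\ell \colon \NN \to \NN$, I take $\Phi = \Phi_g \circ \Phi_\mu \circ \Phi_f \circ \Phi_{x+1}$. First, $\Phi_{x+1}$ is the time-$1$ flow of $(x+1) W'$, sending $(n,0,1)$ to $(n^2+n+1,\, n,\, (n+1)(n^2+n+1))$, which reproduces left multiplication by $\begin{pmatrix} 1 & 0 \\ n+1 & 1 \end{pmatrix}$. Second, $\Phi_f$ is the time-$1$ flow of $f(y) V'$ for an entire $f$ with $f(y_n) = (\ell(n)-n)/D(n)$, where $y_n = (n+1)(n^2+n+1)$ and $D(n) = n^2+n+1 - \ell(n)(n+1)$; since $\{y_n\}$ is discrete and distinct and $D(n) \ne 0$ by the number-theoretic observation from Proposition \ref{prop:sl2tame}, Weierstrass interpolation yields such an $f$. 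Third, $\Phi_\mu$ is the scaling $(z,x,y) \mapsto (z,\,\mu(z)^{-2}x,\, \mu(z)^2 y)$ with an entire nowhere-zero $\mu$ satisfying $\mu(z_n) = 1/D(n)$ at $z_n = (n^2+n+1)/D(n)$. Finally, $\Phi_g$ is the time-$1$ flow of $g(x) W'$ with $g$ interpolating the values that produce the target matrix $\begin{pmatrix} 1 & \ell(n) \\ 0 & 1 \end{pmatrix}$.

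The main obstacle is guaranteeing the existence of the entire functions $\mu$ and $g$ with the stated interpolation values. On $\slgrp_2(\CC)$, the analogous step used the two $U$-invariants $bc$ and $bd$ together with a joint injectivity/discreteness analysis of the pair $(bc,bd)$; on the Danielewski surface only $bc = z-1$ survives as a $\CC^\ast$-invariant, so the verification must be done using $z$ alone, and later using $x$ alone for $g$. The delicate case is when $\ell(n)/n$ admits a bounded subsequential limit different from $1$, since then $z_n$ can accumulate in $\CC^\ast$ while $\mu_n \to 0$, apparently forcing $\mu$ to vanish at the accumulation point; overcoming this likely requires either a sharper arithmetic argument exploiting the injectivity of $\ell$ and the specific form of $D(n)$, or enlarging the ansatz with an additional intermediate shear or scaling to separate the $z$-values. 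The remaining interpolation for $g$ then proceeds analogously through the $x$-coordinate at the points after $\Phi_\mu$.
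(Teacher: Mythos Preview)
Your approach has a genuine gap, and you have correctly identified it yourself: the interpolation step for $\mu$ can fail. The point is that on $\slgrp_2(\CC)$ the $U$-invariant ring contains the two independent functions $bc$ and $bd$, and the proof of Proposition~\ref{prop:sl2tame} uses \emph{both} of them to show that the images $P(n)$ form a discrete injective sequence in $(bc,bd)$-space. On the Danielewski quotient only $z = ad$ (equivalently $bc = z-1$) survives as a $\CC^\ast$-invariant $U$-invariant, so you are forced to interpolate $\mu$ at the one-variable nodes $z_n = (n^2+n+1)/D(n)$. These nodes need not be discrete: for instance, if $\ell(n) = 2n$ for large $n$ (which can be arranged injectively), then $D(n) = -n^2 - n + 1$ and $z_n \to -1$, while the prescribed values $\mu(z_n) = 1/D(n) \to 0$, so no nowhere-vanishing entire $\mu$ exists. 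Your suggestion of ``an additional intermediate shear or scaling'' does not obviously help, because any function in $\ker U'$ on the quotient is still a function of $z$ alone.

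The paper avoids this problem by abandoning the direct translation of Proposition~\ref{prop:sl2tame} and using a shorter composition of only three maps, all of them time-$t$ shears of the two \emph{nilpotent} flows $\varphi_t$ and $\psi_t$; the semisimple scaling is never used. First $\psi_1$ sends $(n,0,1)$ to $(n,n+1,n+1)$, so the $y$-coordinate becomes $n+1$, manifestly injective and discrete. A shear $\varphi_{f(y-1)}$ then adjusts the $x$-coordinate to $\ell(n)$ by solving a quadratic in $f(n)$. Finally a shear $\psi_{g(x)}$ with $g$ interpolated on the discrete set $\ell(\NN)$ brings the $z$-coordinate to $1$, and the defining equation forces $y=0$. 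Because the interpolation nodes are $n+1$ and then $\ell(n)$, both integer-valued and discrete by injectivity of $\ell$, no accumulation issue arises.
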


Note that elements in $\slgrp_2(\CC)$ representing these equivalence classes are e.g.\ $\begin{pmatrix} 1 & n \\ 0 & 1 \end{pmatrix}, \; n \in \NN,$ which form a tame set by Proposition \ref{prop:sl2tame}.

\begin{proof} 
The flow map of $y \frac{\partial}{\partial z} + (2 z - 1) \frac{\partial}{\partial x}$ is given by 
\[
\varphi_t(x,y,z) = (x + t (2 z - 1) + t^2 y, y, z + t y)
\]

The flow map of $x \frac{\partial}{\partial z} + (2 z - 1) \frac{\partial}{\partial y}$ is given by 
\[
\psi_t(x,y,z) = (x, y + t (2 z - 1) + t^2 x, z + t x)
\]

Our desired automorphism that maps $(n, 0, 1)$ to $(\ell(n), 0, 1)$, where $\ell \colon \NN \to \NN$ is an injective map, is given as a composition of three automorphisms:
\begin{enumerate}[label=(\Roman*)]
\item $(x, y, z) \mapsto \psi_1(x,y,z)$, in particular
\[
(n, 0, 1) \mapsto (n, n+1, n+1)
\]
\item $(x, y, z) \mapsto \varphi_{f(y-1)}(x,y,z)$, in particular
\[
(n, n+1, n+1) \mapsto (\ell(n), n+1, n+1 + f(n) \cdot (n+1))
\]
where we have chosen $f \colon \CC \to \CC$ to be a holomorphic function such that
\[
\ell(n) = n + f(n) \cdot (2n+1) + f^2(n) \cdot (n+1) \; \text{ for all } n \in \NN
\]
\item $(x, y, z) \mapsto \psi_{g(x)}(x, y, z)$, in particular
\[
(\ell(n), n+1, n+1 + f(n) \cdot (n+1)) \mapsto (\ell(n), ?, (n+1) \cdot (f(n) + 1) + g(\ell(n)) \cdot \ell(n))
\]
\end{enumerate}

Since $\ell$ is injective and since the image of $\NN$ under $\ell$ is closed and discrete as well, a holomorphic $g \colon \CC \to \CC$ can be constructed such that the $z$-coordinate assumes the value $1$ for all $n \in \NN$. Note that the $y$-coordinate necessarily will be zero, since all the maps were well-defined maps on the Danielewski surface and the coordinates need to satisfy $z^2 - z = x y$.
\end{proof}

\begin{remark}
\label{danifinite}
We now explain how to modify the proof in case we consider the quotient of $\slgrp_2(\CC)/\CC^\ast$ by $\ZZ/2\ZZ$.
This corresponds to the quotient of the hypersurface by Equation \eqref{definingeq} where we identify $(x,y,z)$ with $(-x,-y,-z+1)$, see also \cite{DDK2010}*{Section 3.1}.
The tame set constructed in the proposition above was $A = \{ (n, 0, 1) \, : \, n \in \NN \}$. Note that $(n, 0, 1)$ and $(-n, 0, 0)$ are in the same equivalence class. Inspection of the steps in the proof show that the $x$-coordinate is always a natural number, hence none of the images of $A$ intersects with their counterparts in the respective equivalence class. Therefore, we can use the same vector fields and the representatives given by $A$ to obtain the tame set in this quotient, and prescribing the values of the functions $f$ and $g$ will be well-defined on the quotient.
\end{remark}

\section{Existence of tame sets in affine homogeneous spaces} \label{homtame}

Let $X$ be an affine algebraic complex manifold equipped with a $G=\slgrp_2(\CC)$ algebraic action. Denote by $H_1$ and $H_2$ the unipotent subgroups of $G$ of upper and lower triangular matrices respectively with ones on the diagonal. The group $H_1$ corresponds to the vector field $V$, while $H_2$ corresponds to $W$. We also denote by $H_3$ the closed subgroup of diagonal matrices corresponding to the vector field $U$ in $G$.

\begin{theorem}
\label{findshears}
Assume there exists a point $x_0 \in X$ such that its orbit $G\cdot x_0 \subset X$ is closed and let $H$ be one of the above subgroups. Every $H$-invariant holomorphic function on  $G\cdot x_0 \subset X$ extends to a $H$-invariant holomorphic function on $X$.
\end{theorem}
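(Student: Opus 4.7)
The plan is to construct a continuous linear extension operator $E \colon \mathcal{O}(G \cdot x_0) \to \mathcal{O}(X)$ that is $G$-equivariant. Once such an $E$ is in hand, the theorem follows immediately: for every subgroup $H \subseteq G$ and every $f \in \mathcal{O}(G \cdot x_0)^H$ and every $h \in H$ one has $h \cdot E(f) = E(h \cdot f) = E(f)$, so $E(f)$ is an $H$-invariant extension of $f$.

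First I would invoke Cartan's Theorem B: since $X$ is affine and hence Stein, and $G \cdot x_0 \subseteq X$ is a closed complex submanifold, the restriction map $\mathcal{O}(X) \to \mathcal{O}(G \cdot x_0)$ is surjective. By the classical result on holomorphic extension from closed Stein submanifolds, this restriction moreover admits a continuous linear right inverse $E_0 \colon \mathcal{O}(G \cdot x_0) \to \mathcal{O}(X)$, which I fix for what follows.

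Next I would symmetrize $E_0$ over the maximal compact subgroup $K = \mathrm{SU}(2) \subset G$ by setting
\[
E(f)(x) := \int_K k \cdot E_0\bigl(k^{-1} \cdot f\bigr)(x)\, dk.
\]
The Haar integral converges in $\mathcal{O}(X)$ by compactness of $K$ and continuity of the integrand, and a routine calculation using invariance of Haar measure shows that $E$ is still a continuous linear extension operator of $f$ from $G \cdot x_0$ to $X$ and is in addition $K$-equivariant. To upgrade this to full $G$-equivariance I would use analytic continuation: for fixed $f$, the $\mathcal{O}(X)$-valued map $g \mapsto E(g \cdot f) - g \cdot E(f)$ is holomorphic on $G$ (the $G$-actions on both function spaces depend holomorphically on the group parameter, and $E_0$ is continuous linear) and vanishes on $K$; since $K$ is a totally real form of the connected complex Lie group $G$, the identity principle forces this map to vanish on all of $G$, which yields the desired $G$-equivariance.

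The main non-formal ingredient is the existence of the continuous linear right inverse $E_0$; everything else is formal group averaging and analytic continuation. Should one prefer a more hands-on approach tailored to the $\slgrp_2(\CC)$-setting at hand, an alternative would be to decompose $\mathcal{O}(X)$ and $\mathcal{O}(G \cdot x_0)$ into $\mathrm{SU}(2)$-isotypic components, lift the $H$-invariant part of $f$ (which lives in the fixed subspaces $V_n^H$ of each irreducible $\slgrp_2(\CC)$-representation, in particular one-dimensional when $H$ is unipotent) one component at a time, and reassemble via a Fréchet convergence argument that still reduces to the same type of Stein extension data.
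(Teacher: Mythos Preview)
Your argument is correct and takes a genuinely different route from the paper's. The paper passes to the categorical quotient $X /\!/ H$: since the closed orbit $G\cdot x_0$ is $H$-saturated, its image under the quotient map $\pi_H$ is closed in the affine variety $X /\!/ H$ (for unipotent $H$ this uses Grosshans's transfer principle to ensure $\CC[X]^H$ is finitely generated), and one extends there and pulls back. Your approach instead builds a single $G$-equivariant continuous linear extension operator via Weyl's unitarian trick---average a continuous linear section over $K=\mathrm{SU}(2)$ and then complexify by the identity principle---so that $H$-invariance of the extension comes for free for \emph{every} subgroup $H\subseteq G$, not only the three listed.

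What each approach buys: the paper's argument stays within the algebraic category and needs no functional-analytic input beyond Cartan's extension theorem; in particular it also yields the statement for regular $H$-invariant functions. Your argument is more uniform and more general on the group side, and it is conceptually cleaner, but it rests on the existence of a continuous linear right inverse $E_0$ to the restriction map $\mathcal{O}(X)\to\mathcal{O}(G\cdot x_0)$. That result is true---for a closed complex submanifold of a Stein manifold one can obtain it, for instance, by embedding $X\hookrightarrow\CC^N$ and invoking the classical continuous linear extension from closed submanifolds of $\CC^N$, or by Vogt's splitting theory for nuclear Fr\'echet spaces---but it is nontrivial and you should cite it explicitly rather than treat it as folklore. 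The remaining steps (continuity of the $G$-representation on $\mathcal{O}(X)$, holomorphy of $g\mapsto g\cdot f$ with values in a Fr\'echet space, and the identity principle off the totally real form $K$) are indeed routine once $E_0$ is in hand.
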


\begin{proof}
Consider the $G \times H$-action on $G \times X$ given by
\[
\begin{split}
(G \times H) \times (G \times X) & \to G \times X \\
(g,h),(a,x)& \mapsto (hag^{-1},g \cdot x)
\end{split}
\]

The \textit{transfer principle}  \cite{Grosshans}*{Chap.~Two} states that the ring of $H$-invariant functions $\CC[X]^H$ is isomorphic to $\CC[H \backslash G \times X]^G$, where $H \backslash G$ denotes the space of right cosets for the $H$-action given by left multiplication on $G$. In particular, we are interested in the fact that given a holomorphic function $f \in \Olo(G / G_{x_0})^H$, we can define a holomorphic $H$-invariant function $F$ on $G\cdot x_0 \ \subset X$ as 
\[
F(g\cdot x_0)=f(gG_{x_0}).
\]

Since $F$ is $H$-invariant, it descends to a holomorphic function on $\pi_H(G\cdot x_0) \subset X // H$, where the latter is the categorical quotient of $X$ given by the restriction of the $G$-action to $H$. The set $\pi_H(G\cdot x_0)$ is closed in $X // H$ because $G\cdot x_0$ is closed and $H$-saturated, and $X \to X // H$ is a quotient map. Hence, we can extend $F$ to a $H$-invariant holomorphic function on $X$.
\end{proof}

For a complex $G$-manifold $X$, denote by $G(X) \subset \aut(X)$ the group generated by the $H_1, H_2$ and $H_3$ actions and their shears. Theorem \ref{findshears} guarantees the following: 

\begin{cor} \label{larger}
Let $X$ be a complex manifold equipped with an algebraic $G$-action admitting a closed orbit $C$. If $A \subset C$ is $G(C)$-tame in $C$, then it is $G(X)$-tame in $X$. 
\end{cor}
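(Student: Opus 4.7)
The plan is to lift the $G(C)$-automorphism that realizes a prescribed self-injection of $A$ to a $G(X)$-automorphism of $X$, one elementary flow at a time, using Theorem \ref{findshears}. First I would fix an injection $f \colon A \to A$; by $G(C)$-tameness there exists $F_C \in G(C)$ with $F_C|_A = f$. By the definition of $G(C)$, such an $F_C$ is a finite composition $\Phi_k \circ \cdots \circ \Phi_1$, where each $\Phi_j$ is the time-one flow of a shear $f_j \cdot V_{i_j}|_C$, with $V_{i_j} \in \{V, W, U\}$ and $f_j$ a holomorphic function on $C$ in the kernel of $V_{i_j}|_C$, equivalently an $H_{i_j}$-invariant function on $C$.

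Next I would apply Theorem \ref{findshears} with $H = H_{i_j}$ to extend each $f_j$ to an $H_{i_j}$-invariant holomorphic function $\tilde f_j$ on $X$. Because $\tilde f_j$ is $H_{i_j}$-invariant, it lies in the kernel of the global vector field $V_{i_j}$ on $X$, so the shear $\tilde f_j \cdot V_{i_j}$ is complete on $X$ and its time-one flow $\tilde \Phi_j$ is an element of $G(X)$. Since $C$ is a $G$-orbit it is $H_{i_j}$-invariant, hence $V_{i_j}$ is tangent to $C$; scalar multiplication by $\tilde f_j$ preserves this tangency, so $\tilde \Phi_j$ maps $C$ to itself. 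On $C$ the restricted vector field is $f_j \cdot V_{i_j}|_C$, which is the field generating $\Phi_j$, so $\tilde \Phi_j|_C = \Phi_j$.

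Composing then yields $F_X := \tilde \Phi_k \circ \cdots \circ \tilde \Phi_1 \in G(X)$ with $F_X|_C = F_C$, and in particular $F_X|_A = f$. Note that $A$ is closed and discrete in $X$ because it is closed and discrete in $C$ and $C$ is closed in $X$, so the statement of $G(X)$-tameness makes sense. The only nontrivial step is the extension of the shearing functions, and this is handled precisely by Theorem \ref{findshears}: $H$-invariance is preserved upon extension, so completeness of each extended shear is automatic from the kernel condition, and no separate estimates are needed.
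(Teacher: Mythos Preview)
Your proof is correct and follows essentially the same approach as the paper: decompose the $G(C)$-automorphism into elementary shear flows, extend each shearing function to $X$ via Theorem~\ref{findshears}, and compose the resulting $G(X)$-automorphisms. You include a couple of details the paper leaves implicit (the tangency of $V_{i_j}$ to $C$ ensuring $\tilde\Phi_j|_C=\Phi_j$, and the observation that $A$ remains closed and discrete in $X$), but the argument is the same.
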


\begin{proof}
Let $A = \{ a_n \}_{n \in \NN} \subset C$ and let $\ell \colon \NN \to \NN$ be a prescribed injection. Since $A$ is $G(C)$-tame, there exists automorphisms $F_k \in G(C)$ for $k = 1, \dots, N$, for some $N \in \NN$ such that $F_N \circ \dots \circ F_1 (a_n) = a_{\ell(n)}$ for all $n \in \NN$ and $F_k = \varphi^f_Y$ for $Y \in \{V, W, U\}$ and $f \in \ker Y \subset \Olo(C)$.

By Theorem \ref{findshears}, $f$ can be extended to a holomorphic function (which we still denote by $f$) on $X$ which is still in the kernel of $Y$. Then each $F_k = \varphi^f_Y$ is a well-defined automorphism of $X$ which extends the one on $C$. Hence, $A$ is $G(X)$-tame.
\end{proof}

\bigskip

We will need the following theorem from our previous work.
\begin{theorem}[\cite{AndristUgolini2019}*{Theorem~5.10}]
\label{commuting}
Let $X$ be an affine algebraic complex manifold and let $V,W$ be
complete algebraic vector fields whose flows are algebraic. If
$[V,W]=0$ and their kernels are not contained one into the other, then
there exists a tame set in $X$.
\end{theorem}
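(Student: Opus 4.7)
The plan is to construct an explicit tame set lying on a single generic $\CC^2$-orbit of the two commuting flows. First I would pick a point $p_0 \in X$ at which $V(p_0)$ and $W(p_0)$ are linearly independent; such points exist (and form a dense Zariski-open subset) because otherwise $V \wedge W$ would vanish identically, which by algebraicity of the fields would force one of $\ker V,\,\ker W$ to contain the other. Set
\[
A \;:=\; \bigl\{\, p_n := \varphi^{n}_{V}(p_0) \;:\; n \in \NN \,\bigr\}.
\]
Algebraicity and completeness of the $V$-flow ensure that $A$ is closed and discrete in $X$. A general-position refinement of $p_0$ also lets me assume that some fixed $f \in \ker W \setminus \ker V$ separates the points of $A$, and that some fixed $g \in \ker V \setminus \ker W$ separates points along a $W$-orbit; both functions exist by the non-containment hypothesis.

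Given an injection $\ell \colon \NN \to \NN$, I would, in the spirit of Proposition~\ref{prop:sl2tame}, look for an automorphism of the form
\[
F \;=\; \varphi^{1}_{\beta_2 \cdot W}\, \circ\, \varphi^{1}_{\alpha \cdot V}\, \circ\, \varphi^{1}_{\beta_1 \cdot W}, \qquad \beta_1,\beta_2 \in \ker W, \quad \alpha \in \ker V.
\]
The first shear maps $p_n$ to $\varphi^{\beta_1(p_n)}_{W}(p_n)$; choosing $\beta_1$ so that the values $\beta_1(p_n)$ are pairwise distinct (possible because $f \in \ker W$ separates the $p_n$) places the images onto pairwise distinct $V$-orbits, which are therefore separable by elements of $\ker V$. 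The middle shear then advances each image by time $\alpha$ along its new $V$-orbit, and I prescribe $\alpha$ to take the value $\ell(n) - n$ at the $n$-th image, using $g$ to distinguish the images inside $\ker V$. The final shear $\varphi^{1}_{\beta_2 \cdot W}$ is chosen so that $\beta_2$ exactly undoes the $W$-displacement introduced by $\beta_1$, returning every point to the $V$-orbit of $p_0$ at the shifted parameter $\ell(n)$. Commutativity $[V,W]=0$ makes the $V$- and $W$-contributions decouple cleanly, so one verifies $F(p_n) = p_{\ell(n)}$.

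The main obstacle is performing the three interpolation steps \emph{inside} the algebras $\ker V$ and $\ker W$, rather than merely in $\Olo(X)$. For this I would exploit that, in the algebraic setting, $\ker V$ and $\ker W$ are the rings of regular/holomorphic functions on the respective categorical quotients $X /\!/ \langle \varphi^{\bullet}_{V} \rangle$ and $X /\!/ \langle \varphi^{\bullet}_{W} \rangle$, and that $f$ and $g$ separate the images of the relevant discrete sets in those quotients. Combining this with Cartan's Theorem~B applied on the affine quotient yields holomorphic interpolants taking arbitrary prescribed values on a closed discrete sequence, and the pullbacks to $X$ automatically lie in the desired kernel. At each stage one also has to verify that the current image of $A$ remains closed and discrete in $X$ (immediate, since each intermediate map is a biholomorphism) and that its projection to the relevant quotient is closed and discrete as well; this latter point is the delicate step, to be handled by a properness/algebraicity argument along the orbits of the commuting flow.
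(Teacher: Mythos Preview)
The present paper does not prove this statement; Theorem~\ref{commuting} is quoted from \cite{AndristUgolini2019}*{Theorem~5.10} and used as a black box in the proof of Theorem~\ref{mainthm}. There is therefore no in-paper argument to compare your sketch against.

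On the merits of your outline: the three-shear ansatz $\varphi^{1}_{\beta_2 W}\circ\varphi^{1}_{\alpha V}\circ\varphi^{1}_{\beta_1 W}$ along a single $V$-orbit, with commutativity decoupling the two directions, is indeed the shape of the argument in the cited source. However, the step you flag as ``delicate'' is a genuine gap, and the fix you propose does not work as stated. Interpolating inside $\ker V$ via Cartan's Theorem~B on the categorical quotient $X/\!/\langle\varphi^\bullet_V\rangle=\mathrm{Spec}(\ker V)$ presupposes that this quotient is a reasonable Stein space and that the relevant points project to a closed discrete subset there; but $\ker V$ need not be finitely generated (Nagata), and even when it is, the quotient map is not in general a geometric quotient, so distinct $V$-orbits can have the same image. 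A properness argument along the orbits will not repair this.

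The way the original proof avoids the issue is simpler than what you propose: one never interpolates in the full kernels. Instead one fixes, once and for all, the single functions $f\in\ker W\setminus\ker V$ and $g\in\ker V\setminus\ker W$, and takes every shear parameter to be of the form $h\circ f$ or $h\circ g$ for an entire function $h$ of one variable. Then the only thing to arrange is that the scalar sequences $f(p_n)$, $g(q_n)$, $f(r_n)$ at the three stages are injective with closed discrete image in~$\CC$, which is a condition on a non-constant one-variable polynomial (the restriction of $f$ or $g$ to the relevant algebraic orbit) and can be secured by choosing the flow-times $t_n$ along the $V$-orbit appropriately rather than insisting on $t_n=n$. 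Your sketch becomes correct once you make this reduction explicit.
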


We will now prove Theorem \ref{mainthm} using Theorems \ref{findshears} and \ref{commuting} above.

\begin{proof}[Proof of Theorem \ref{mainthm}]
We closely follow Donzelli--Dvorsky--Kaliman \cite{DDK2010}*{Corollary~25 and Theorem~26} to find either a non-degenerate fix-point free action of $\slgrp_2(\CC)$ on $X = G/H$ that arises from the left-multiplication of $G$ or a pair of vector fields as in Theorem \ref{commuting}.

We can exclude the case that $G$ is isomorphic to $\slgrp_2(\CC)$ since this case and all its homogeneous spaces have been treated already.

Let $G$ be a linear algebraic group. Consider the nontrivial unipotent radical $U$ of $G$. By Mostov's decomposition theorem there is a maximal closed reductive subgroup $G_0$ of $G$ such that $G$ is a semidirect product of $U$ and $G_0$. We can suppose that $R$ is contained in $G_0$, see proof of \cite{DDK2010}*{Theorem~26, p.~568}, and conclude that $G/R$ is isomorphic as algebraic variety to $G_0/R \times U$. If both factors are non-trivial, we pick natural non-trivial $\CC^+$-actions in each factor and we are done by Theorem \ref{commuting}.
If we have only the factor $U$, then it is isomorphic as algebraic variety to $\CC^n$ and we are done for $n \geq 2$ by the classical results on existence of tame sets.

Hence, we can assume in the following that $G$ is isomorphic to $G_0/R$ with reductive $G_0$ which is isomorphic to $(S \times (\CC^\ast)^m)/E$ where $S$ is a complex semisimple group and $E$ is a finite central subgroup. Let us first discuss the case $G = S \times (\CC^\ast)^m$ i.e. $E$ is trivial. The proof of \cite{DDK2010}*{Theorem~26, p.567} gives that our homogeneous space is isomorphic to a product $S/R' \times (\CC^\ast)^{m_0}$ for a reductive group $R'$ and $m_0 \leq m$. If neither factor is trivial, we again apply Theorem \ref{commuting}. If we only have a torus, then the existence of tame sets was proved in \cite{AndristUgolini2019}. The case of a homogeneous space of a semi-simple group will be covered later in the proof.

If $E$ is not trivial, the fact that it is central guarantees that any action on $G$ coming from left multiplication descends to the quotient. The projection of a tame set in $G$ will then be tame, since all our automorphisms come from left multiplication; finding $E$-invariant interpolating functions is handled as in Remark \ref{sl2finite}.

Let us now deal with the case of $G$ semi-simple; following the proof of \cite{DDK2010}*{Corollary~25}, we have that $X$ is isomorphic to a quotient of the form $G/R$ where $G = G_1 \oplus \dots \oplus G_N$ is semi-simple and the $G_i$'s are simple.
Consider the projection $\pi_k \colon G \to G_k$ and $R_k = \pi_k(R)$ which is reductive being the image of a reductive group. If $N$ is the minimal possible then $R_k \neq G_k$ for every $k$.
If $N \geq 2$, we again conclude with Theorem \ref{commuting}.
Hence, we assume that $G$ is a simple Lie group. If it is isomorphic to $\slgrp_2(\CC)$, we are done by Section \ref{sl2case}. Otherwise, \cite{DDK2010}*{Theorem~24} provides the existence of a subgroup of $G$ which is isomorphic to $\slgrp_2(\CC)$ and such that its action by left multiplication on $G/R$ is non-degenerate and fixed-point free. In particular, this action admits a closed orbit and we conclude by applying Corollary \ref{larger} and the fact that such an orbit admits a tame set, which was proven in Section \ref{sl2case}.
\end{proof}

\section{Non-equivalent tame sets} \label{nonequivtame}
Theorem \ref{dpequiv} gives that in a Stein manifold with the density property, any two tame sets are equivalent. We are interested in finding a Stein manifold admitting non-equivalent tame sets. The presence of a tame set forces the automorphism group of a manifold to be somewhat large, while the existence of tame sets that cannot mapped into each other restricts the size of the automorphisms group; we here provide a few examples that lie between these two conditions.

Before proceeding, we observe that the simple case of $\udisc \times \CC$ does not fulfill our purpose as it simply does not admit tame sets (even if it does admit weakly tame ones). Fortunately, we do not have to look much further.

\begin{proposition} \label{nontameproduct}
Let $\Omega$ be a Brody hyperbolic domain (i.e.\ all holomorphic maps from $\CC$ into $\Omega$ are constant) such that $\aut(\Omega)$ does not act transitively. Then, $\Omega \times \CC^n$ admits non-equivalent tame sets for all $n>1$.
\end{proposition}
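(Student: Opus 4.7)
The plan is to exploit Brody hyperbolicity of $\Omega$ to force every automorphism of $\Omega \times \CC^n$ to split along the product structure, and then to use the non-transitivity of $\aut(\Omega)$ to produce two fibre-type tame sets that cannot be mapped to each other.

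The key structural step is to show that every $F \in \aut(\Omega \times \CC^n)$ has the form $F(\omega, z) = (\phi(\omega), \psi(\omega, z))$ with $\phi \in \aut(\Omega)$ and $\psi$ holomorphic. Let $\pi \colon \Omega \times \CC^n \to \Omega$ denote the projection. For fixed $\omega_0 \in \Omega$, the map $z \mapsto \pi(F(\omega_0, z))$ is a holomorphic map $\CC^n \to \Omega$. Its restriction to any affine complex line in $\CC^n$ is a holomorphic map $\CC \to \Omega$ and is therefore constant by Brody hyperbolicity. Since $\CC^n$ is connected through any fixed point by complex lines, the map is constant. Hence $\pi \circ F$ depends only on the $\omega$-coordinate; call this dependence $\phi$. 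Applying the same argument to $F^{-1}$ produces an inverse for $\phi$, so $\phi \in \aut(\Omega)$.

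Next, for each $\omega \in \Omega$ set
\[
A_\omega := \{\omega\} \times (\NN \cdot e_1) \subset \Omega \times \CC^n,
\]
where $e_1$ is the first standard basis vector of $\CC^n$. This is a closed, discrete, infinite subset. I would check that it is tame as follows: given any injection $\ell \colon A_\omega \to A_\omega$, the induced injection $\NN \cdot e_1 \to \NN \cdot e_1$ of the Rosay--Rudin tame set in $\CC^n$ extends, since $n \geq 2$, to some $F_0 \in \aut(\CC^n)$, and then $(\omega', z) \mapsto (\omega', F_0(z))$ is an automorphism of $\Omega \times \CC^n$ realizing $\ell$ on $A_\omega$.

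Finally, pick $\omega_0, \omega_1 \in \Omega$ in distinct $\aut(\Omega)$-orbits, which exist by hypothesis. Suppose toward contradiction that some $F \in \aut(\Omega \times \CC^n)$ satisfies $F(A_{\omega_0}) = A_{\omega_1}$. By the splitting, $F(\omega_0, k e_1) = (\phi(\omega_0), \ast)$ for every $k \in \NN$, while this point must lie in $A_{\omega_1} = \{\omega_1\} \times (\NN \cdot e_1)$; hence $\phi(\omega_0) = \omega_1$, contradicting the choice of representatives. The only non-routine part of the argument is the splitting, and that is unlocked directly by the Brody hyperbolicity assumption; the remaining assertions are formal.
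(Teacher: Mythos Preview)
Your proof is correct and follows the same overall strategy as the paper: establish a splitting lemma for automorphisms of $\Omega \times \CC^n$, then place tame sets in fibres over points lying in distinct $\aut(\Omega)$-orbits. The paper states the proposition as an immediate consequence of the splitting lemma and leaves the fibre-tame-set construction and the final non-equivalence argument implicit; you spell these out correctly.

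The one substantive difference is in the proof of the splitting lemma itself. To show that the induced self-map $\phi$ of $\Omega$ is a biholomorphism, you simply apply the Brody argument to $F^{-1}$ and read off a two-sided holomorphic inverse for $\phi$. The paper instead first notes surjectivity of $\phi$ from surjectivity of $F$, and then argues injectivity by contradiction: a fibre $\phi^{-1}(\phi(z))$ with more than one point would force a decomposition of $\CC^n$ into pairwise disjoint Fatou--Bieberbach domains $g(z',\CC^n)$, which is impossible. Both arguments are valid; yours is more elementary and avoids invoking anything about Fatou--Bieberbach domains, while the paper's argument has the minor by-product of identifying each slice map $w \mapsto g(z',w)$ as a biholomorphism of $\CC^n$.
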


Examples of such a domains $\Omega$ are all non-homogeneous bounded domains $\Omega \subset \CC^n$.

Proposition \ref{nontameproduct} is a direct consequence of the following Lemma.

\begin{lemma}
Let $\Omega$ be a Brody hyperbolic domain. Then, all automorphisms of $\Omega \times \CC^n, \ n>1$ are of the form $(z,w) \mapsto (f(z),g(z,w))$ where $f \in \aut(\Omega)$.
\end{lemma}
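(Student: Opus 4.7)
The plan is to write an arbitrary automorphism $F\in\aut(\Omega\times\CC^n)$ in components $F(z,w)=(F_1(z,w),F_2(z,w))$ with $F_1\colon\Omega\times\CC^n\to\Omega$ and $F_2\colon\Omega\times\CC^n\to\CC^n$, and then show that $F_1$ does not depend on the $w$-variable. This would immediately give $F_1(z,w)=f(z)$ for some holomorphic $f\colon\Omega\to\Omega$, and $F_2(z,w)=g(z,w)$ is automatically of the claimed form. Once $F_1=f(z)$ is established, applying the same argument to $F^{-1}$ yields $F^{-1}(z,w)=(\tilde f(z),\tilde g(z,w))$, and comparing compositions $F\circ F^{-1}=\id$ and $F^{-1}\circ F=\id$ in the first coordinate gives $f\circ\tilde f=\tilde f\circ f=\id_\Omega$, so $f\in\aut(\Omega)$.

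The core of the argument is the first step: for each fixed $z_0\in\Omega$, consider the holomorphic map
\[
h_{z_0}\colon\CC^n\to\Omega,\qquad w\mapsto F_1(z_0,w).
\]
To show $h_{z_0}$ is constant, I restrict to an arbitrary complex line: for $w_0,v\in\CC^n$ the map $\CC\to\Omega$, $t\mapsto h_{z_0}(w_0+tv)$, is holomorphic from $\CC$ into the Brody hyperbolic domain $\Omega$, hence constant. Since this holds for every affine line in $\CC^n$, $h_{z_0}$ is constant, so there is a well-defined value $f(z_0):=F_1(z_0,w)$ independent of $w$. The function $f\colon\Omega\to\Omega$ is holomorphic, e.g.\ because $f(z)=F_1(z,0)$.

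I expect no real obstacle; the only subtlety is checking that the analogous argument goes through for the inverse, which it does verbatim since $\Omega\times\CC^n$ being Brody hyperbolic in the $w$-direction in the target is what was used, and this is symmetric between $F$ and $F^{-1}$. Note that the hypothesis $n>1$ is not actually needed for the Lemma itself (only $n\geq 1$); it is retained because this Lemma is applied in Proposition~\ref{nontameproduct} where $n>1$ is required for existence of tame sets in $\CC^n$.
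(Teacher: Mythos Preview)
Your argument is correct. The first step---using Brody hyperbolicity of $\Omega$ to conclude that $w\mapsto F_1(z_0,w)$ is constant by restricting to complex lines---is exactly what the paper does. Where you diverge is in showing $f\in\aut(\Omega)$: you apply the same reduction to $F^{-1}$ to get $F^{-1}(z,w)=(\tilde f(z),\tilde g(z,w))$, and read off $f\circ\tilde f=\tilde f\circ f=\id_\Omega$ from the first coordinate of $F\circ F^{-1}=F^{-1}\circ F=\id$. The paper instead argues directly: surjectivity of $f$ is immediate from surjectivity of $F$, and for injectivity it observes that if $A=f^{-1}(f(z))$ had more than one point then $\CC^n$ would decompose as a disjoint union $\bigsqcup_{z'\in A} g(z',\CC^n)$ of nonempty open sets (images of injective holomorphic maps $\CC^n\to\CC^n$), contradicting connectedness of $\CC^n$. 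Your symmetry argument is shorter and avoids this topological step entirely; the paper's route has the minor advantage of giving a concrete picture of what would go wrong, namely a partition of $\CC^n$ into Fatou--Bieberbach domains. Your remark that $n\geq 1$ suffices for the Lemma (with $n>1$ needed only for the application) is also correct.
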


\begin{proof}
Let $F \in \aut(\Omega \times \CC^n)$ and let $\pi_1 \colon \Omega \times \CC^n \to \Omega$ be the projection onto the first coordinate.

Fixing $z \in \Omega$, hyperbolicity gives that the map $w \mapsto \pi_1 \circ F (z,w)$ is constant i.e.\ $F(z,w)=(f(z), g(z,w))$.
As $F$ is an automorphism, $f$ is surjective. We are left to prove that $f$ is also injective: 
Assume it is not and let $z \in \Omega$ be such that $A = \{ z' \in \Omega : f(z) = f(z') \} \neq \{z\}$. For each fixed $z' \in \Omega$, the map $w \mapsto g(z',w)$ is injective. Furthermore for $z' \neq z'' \in A$ we have that $g(z', \CC^n) \cap g(z'',\CC^n) = \emptyset$; if that was not the case, $F$ would not be injective.

Hence we have the decomposition $\CC^n = \bigsqcup_{z' \in A} g(z',\CC^n)$ into Fatou--Bieberbach domains. This is not possible unless $A = \{z\}$, reaching a contradiction.
\end{proof}

A more sophisticated example of a Stein manifold admitting non-equivalent tame sets which is not a direct product, is the spectral ball:

\begin{definition}
The $n$-dimensional \emph{spectral ball} $\sball_n$ is defined as
\[
\sball_n := \left\{ A \in \mathrm{Mat}(n \times n, \CC) \,:\, \mathop{\mathrm{spec}} A \subset \udisc \right\}
\]
\end{definition}
In dimension $n = 1$ the spectral ball is just the unit disc. For higher dimensions, the automorphism group of $\sball_n$ has been described by Andrist and Kutzschebauch \cite{AndristKutzschebauch2018}.
There is a natural map $\pi \colon \mathrm{Mat}(n \times n, \CC) \to \CC^n$ which sends a square matrix to its symmetrized vector of eigenvalues. In dimension $n = 2$, we observe that $\pi = (\mathrm{tr}, \det)$. The image $\pi(\sball_2) = \sdisc^2$ is the symmetrized polydisc, a bounded and hence hyperbolic domain in $\CC^2$. The fibers of $\pi$ consist precisely of the matrices with the same eigenvalues. A fibre with two different eigenvalues is homogeneous w.r.t.\ conjugation by $\slgrp_2(\CC)$. A fibre with a double eigenvalue consists of two orbits w.r.t.\ conjugation by $\slgrp_2(\CC)$ and is singular. In any case, every fibre contains copies of $\CC$ (and is in fact $\CC$-connected), as can be seen from $\CC \ni t \mapsto \begin{pmatrix} \lambda & t \\ 0 & \mu \end{pmatrix} \in \sball_2$ with eigenvalues $\lambda, \mu \in \udisc$. Since $\sdisc^2$ is hyperbolic, every composition of such a map with $\pi$ is constant. Hence an automorphism of $\sball_2$ necessarily maps fibres to fibres. However, singular fibres cannot be mapped to non-singular fibres by a holomorphic automorphism of $\sball_2$. If we can construct tame sets in each fibre, then we have constructed non-equivalent tame sets in $\sball_2$. Note that we just proved as well, that the action of holomorphic automorphisms of $\sball_2$ is not transitive. The question, whether a Stein manifold admitting tame sets and having a transitive action of the group of holomorphic automorphism, has the density property, remains open.

\begin{lemma}
Each fibre of $\pi \colon \sball_2 \to \sdisc^2$ contains an $\aut(\sball_2)$-tame set.
\end{lemma}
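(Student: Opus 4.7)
The plan is to distinguish fibres of $\pi$ according to whether the two eigenvalues $\lambda,\mu$ are distinct (\emph{generic} fibres) or coincide (\emph{singular} fibres) and to construct an explicit tame set in each case via a common three-shear construction. The key ingredients are global complete vector fields on $\sball_2$ coming from the $\slgrp_2(\CC)$-conjugation action: $V^\sharp = [V_0,\cdot]$, $W^\sharp = [W_0,\cdot]$, $U^\sharp = [U_0,\cdot]$, where $V_0,W_0,U_0$ are the standard generators of $\slalg_2(\CC)$. A direct calculation shows that the matrix entries $c$, $b$ and the product $bc$ lie in $\ker V^\sharp$, $\ker W^\sharp$, $\ker U^\sharp$ respectively, so for any entire $\phi \colon \CC \to \CC$ the vector fields $\phi(c) V^\sharp$, $\phi(b) W^\sharp$ and $\phi(bc) U^\sharp$ are complete on $\sball_2$; their time-$1$ flows are conjugations by $\exp(\phi(c) V_0)$, $\exp(\phi(b) W_0)$ and $\exp(\phi(bc) U_0)$, each of which is an automorphism of $\sball_2$ that preserves every fibre of $\pi$.

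With these tools in hand, I would take the tame set to be $A_n = \left(\begin{smallmatrix} \lambda & n(\mu - \lambda) \\ 0 & \mu \end{smallmatrix}\right)$, $n \geq 1$, in a generic fibre and $A_n = \left(\begin{smallmatrix} \lambda & n \\ 0 & \lambda \end{smallmatrix}\right)$, $n \geq 1$, in a singular fibre. In both cases $\{A_n\}$ is an infinite discrete closed subset of $\sball_2$ (the $b$-coordinate tends to infinity) lying in the regular $\slgrp_2(\CC)$-orbit of its fibre. Given any self-injection $n \mapsto \ell(n)$, I would compose three automorphisms: first $\Phi_1$, the shear of $W^\sharp$ with constant coefficient $1$, which makes the coordinates $c$ and $bc$ vary nontrivially with $n$; then $\Phi_2$, the shear of $U^\sharp$ by a function $\phi_2(bc)$ chosen so that $\phi_2(bc|_{\Phi_1(A_n)}) = \tfrac12 \log(\ell(n)/n)$; and finally $\Phi_3$, the shear of $W^\sharp$ by a function $\phi_3(b)$ with $\phi_3(b|_{\Phi_2\Phi_1(A_n)}) = -n/\ell(n)$. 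A direct computation then shows that $\Phi_3 \circ \Phi_2 \circ \Phi_1(A_n) = A_{\ell(n)}$, and the required entire functions $\phi_2$, $\phi_3$ exist by the Weierstrass interpolation theorem, their interpolation nodes being discrete infinite subsets of $\CC$.

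The main technical point is that this three-shear ansatz succeeds because all three clean-up conditions on the $(1,1)$, $(2,1)$, and $(2,2)$ entries after $\Phi_3$ collapse to the single equation $r e^{2q} = -1$, where $q = \phi_2(bc)$ and $r = \phi_3(b)$; the freely chosen constant $q$ is then used to match the $(1,2)$-entry to its target value. Verifying this collapse (concretely, the fact that $\Delta(e^{-2q}+r) - b_0(e^{-q}+re^q)^2$ vanishes whenever $r = -e^{-2q}$, regardless of $\Delta = \lambda-\mu$) is the only nontrivial computation, and it uniformizes the generic and singular cases. A minor subtlety is that in the singular case the scalar matrix $\lambda I$ is fixed by every $\slgrp_2(\CC)$-conjugation and its shears, so it cannot be moved by the construction; restricting the indexing to $n \geq 1$ excludes $\lambda I$ from the tame set.
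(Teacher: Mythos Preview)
Your proof is correct and the three-shear scheme you describe does work; the computation that the $(2,1)$-entry $\Delta(e^{-2q}+r)-b_0(e^{-q}+re^q)^2$ vanishes when $r=-e^{-2q}$ is exactly the right clean-up, and the interpolation nodes for $\phi_2,\phi_3$ are indeed injective and discrete in both cases.

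The route, however, differs from the paper's. Both proofs start with the time-$1$ flow of $W^\sharp$ (conjugation by $\left(\begin{smallmatrix}1&0\\1&1\end{smallmatrix}\right)$) to make the lower entries nontrivial. For the middle step the paper uses a shear of the \emph{upper-unipotent} conjugation field $V$, with the shear function $f(\lambda-\mu-c)$ chosen to solve a quadratic in $f$ so that the $b$-entry becomes $\ell(k)$; the third step is again a $W$-shear in $b$. You instead use a shear of the \emph{diagonal} conjugation field $U^\sharp$ in the middle: since the diagonal flow rescales $b$ by $e^{2q}$, matching the $(1,2)$-entry becomes the linear condition $e^{2q}=b_0'/b_0$ rather than a quadratic, and the remaining three entries collapse to the single equation $r=-e^{-2q}$. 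Your approach therefore avoids the quadratic equation at the cost of introducing a logarithm in the interpolation data; the paper's approach stays entirely within unipotent shears. The paper also handles both fibre types with the single tame set $\bigl\{\left(\begin{smallmatrix}\lambda&k\\0&\mu\end{smallmatrix}\right):k\in\NN\bigr\}$, whereas your case split $b_0=n(\mu-\lambda)$ versus $b_0=n$ is not actually needed: your scheme works verbatim with $b_0=n$ in the generic fibre as well, since then $bc|_{\Phi_1(A_n)}=n(\lambda-\mu)-n^2$ is still injective on $\NN$ because $|\lambda-\mu|<2$ forces $n+m\neq\lambda-\mu$ for $n,m\in\NN$.
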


\begin{proof}
We show that for each $\lambda, \mu \in \udisc$, including the case $\lambda = \mu$, the set 
\[
\left\{ \begin{pmatrix}
\lambda & k \\
0 & \mu
\end{pmatrix}
\;:\;
k \in \NN
\right\} 
\subset \pi^{-1}(\lambda + \mu, \lambda \cdot \mu) \subset \sball_2
\]
is tame.

In the following we will fix the coordinates for
\[
\slgrp_2(\CC) = \left\{ \begin{pmatrix} a & b \\ c & d \end{pmatrix} \,:\, ad - bc = 1 \right\}
.\]

We will need the following $\CC$-complete vector fields
\begin{align*}
V &= c \frac{\partial}{\partial a} + (d-a) \frac{\partial}{\partial b} - c  \frac{\partial}{\partial d} \\
W &= -b \frac{\partial}{\partial a} + (a-d) \frac{\partial}{\partial c} + b \frac{\partial}{\partial d}
\end{align*}
with their respective flows
\begin{align*}
\varphi_V^t &= \begin{pmatrix} 1 & t \\ 0 & 1 \end{pmatrix} \cdot \begin{pmatrix} a & b \\ c & d \end{pmatrix} \cdot\begin{pmatrix} 1 & -t \\ 0 & 1 \end{pmatrix} = \begin{pmatrix}c t+a & t \left( d-c t\right) -a t+b\\ c & d-c t\end{pmatrix} \\
\varphi_W^t &= \begin{pmatrix} 1 & 0 \\ t & 1 \end{pmatrix} \cdot \begin{pmatrix} a & b \\ c & d \end{pmatrix} \cdot \begin{pmatrix} 1 & 0 \\ -t & 1 \end{pmatrix} = \begin{pmatrix}a-b t & b \\ t \left( a-b t\right) -d t+c & b t+d\end{pmatrix}\end{align*}

We also have the following relations for the kernels:
\begin{align*}
\ker {V} &\supseteq \CC[c,a+d] \\
\ker {W} &\supseteq \CC[b,a+d] 
\end{align*}

We are given an injective map $\ell \colon \NN \to \NN$ that prescribes the injection
\[
\begin{pmatrix}
\lambda & k \\
0 & \mu
\end{pmatrix}
\mapsto
\begin{pmatrix}
\lambda & \ell(k) \\
0 & \mu
\end{pmatrix}
.\]

Next, we construct an interpolating holomorphic automorphism of $\slgrp_2(\CC)$ as a composition of suitable time-$1$ maps of complete vector fields corresponding to conjugations.

The desired automorphism is given by:
\[
\varphi_W^{G} \circ \varphi_V^{F}  \circ \varphi_W^1 
\]
where $F$ and $G$ are holomorphic functions given as follows:
\begin{enumerate}
\item \[ F\left( \begin{pmatrix} a & b \\ c & d \end{pmatrix} \right) = f(\lambda-\mu-c) \]
By the Mittag-Leffler interpolation theorem we find a holomorphic function $f \colon \CC \to \CC$ such that $f(k)$ solves the quadratic equation
\[
-(1+f(k)) \cdot (-k - k f(k) + (\lambda - \mu) f(k)) = \ell(k)
\]
for all $k \in \NN$. Note that $F \in \ker V$, hence $F \cdot V$ is $\CC$-complete and $\varphi_V^{F}$ is a holomorphic automorphism.
\item \[ G\left( \begin{pmatrix} a & b \\ c & d \end{pmatrix} \right) = g(b) \]
Note that $\ell \colon \NN \to \NN$ is injective and has in particular a closed discrete image. Again by the Mittag-Leffler interpolation theorem we find a holomorphic function $g \colon \CC \to \CC$ such that $g(\ell^{-1}(k)) = - \frac{1}{1 + f(k)}$ for all $k \in \NN$. This is well defined, since $f(k) \neq - 1$ for all $k \in \NN$ by construction. Note that $G \in \ker W$, hence $G \cdot W$ is $\CC$-complete and $\varphi_V^{F}$ is a holomorphic automorphism.
\end{enumerate}
We now check that we indeed interpolate correctly:
\begin{align*}
\begin{pmatrix} \lambda & k \\ 0 & \mu \end{pmatrix}
&\stackrel{\varphi_W^1}{\mapsto}
\begin{pmatrix} \lambda-k & k \\ \lambda - \mu - k & \mu + k\end{pmatrix} \\
&\stackrel{\varphi_V^F}{\mapsto}
\begin{pmatrix} (\lambda - \mu) f(k) + \lambda - k f(k) - k & -(1+f(k)) \cdot (-k -k f(k) + (\lambda - \mu) f(k)) \\ -k & -((\lambda - \mu) f(k) - \mu - k f(k) - k) \end{pmatrix} \\
&\stackrel{\varphi_W^G}{\mapsto} 
\begin{pmatrix} \lambda & -(1+f(k)) \cdot (-k - k f(k) + (\lambda - \mu) f(k)) \\ 0 & \mu \end{pmatrix} \\
&=
\begin{pmatrix} \lambda & \ell(k) \\ 0 & \mu \end{pmatrix} \qedhere
\end{align*}

\end{proof}

\begin{cor}
The spectral ball $\sball_2$ contains non-equivalent tame sets.
\end{cor}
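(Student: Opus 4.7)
The corollary is immediate from the lemma together with the observations collected just before its statement, so the proof proposal is short. The plan is to exhibit two tame sets living in fibres of $\pi \colon \sball_2 \to \sdisc^2$ of genuinely different type, and then to rule out an automorphism between them by fibre considerations.

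First, I would invoke the preceding lemma twice: pick $\lambda \in \udisc$ and put
\[
A := \left\{ \begin{pmatrix} \lambda & k \\ 0 & \lambda \end{pmatrix} \,:\, k \in \NN \right\} \subset \pi^{-1}(2\lambda,\lambda^2),
\]
which lives in a singular fibre (double eigenvalue), and pick $\mu \in \udisc$ with $\mu \neq \lambda$ and put
\[
B := \left\{ \begin{pmatrix} \lambda & k \\ 0 & \mu \end{pmatrix} \,:\, k \in \NN \right\} \subset \pi^{-1}(\lambda+\mu,\lambda\mu),
\]
which lives in a non-singular fibre. By the lemma both $A$ and $B$ are $\aut(\sball_2)$-tame.

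Next, I would argue that no $F \in \aut(\sball_2)$ can send $A$ to $B$. The paragraph preceding the lemma already provides the two ingredients needed: (i) each fibre of $\pi$ is $\CC$-connected, and $\sdisc^2$ is hyperbolic, so $\pi \circ F$ is constant on each fibre of $\pi$; hence $F$ descends to a self-map of $\sdisc^2$ and in particular maps fibres of $\pi$ to fibres of $\pi$; (ii) singular fibres of $\pi$ cannot be mapped by a biholomorphism of $\sball_2$ to non-singular fibres, since the former are singular as complex spaces and the latter are smooth (homogeneous under $\slgrp_2(\CC)$-conjugation). If $F(A) = B$ were possible, then $F$ would have to send the singular fibre containing $A$ onto the non-singular fibre containing $B$, contradicting (ii).

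There is no real obstacle here: the lemma supplies both tame sets, and the fibre-preservation plus smoothness dichotomy were already established in the discussion of $\sball_2$ earlier in the section. The only point that deserves a sentence of care is the claim that $F$ maps fibres to fibres, which follows from Brody hyperbolicity of $\sdisc^2$ together with the existence of entire curves $\CC \to \pi^{-1}(\mathrm{pt})$ exhibited in that earlier discussion.
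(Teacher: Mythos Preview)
Your proposal is correct and takes essentially the same approach as the paper: the paper's discussion preceding the lemma already lays out the argument that automorphisms of $\sball_2$ map fibres to fibres (by hyperbolicity of $\sdisc^2$ and $\CC$-connectedness of fibres) and cannot send singular fibres to non-singular ones, so that tame sets in fibres of different type are non-equivalent; your proof simply makes this explicit by choosing one tame set in a singular fibre and one in a smooth fibre.
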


\begin{remark}
The construction shows that for any discrete sequence in $\sdisc^2$ we can construct a sequence of tame sets in the respective fibres such that on every such tame set an injective selfmap can be interpolated by a holomorphic automorphism simultaneously. The construction is of course not limited to dimension two and works for any $n \geq 2$.
\end{remark}

\section{Relative Density Property for tame complements in $\CC^n$}
\label{secrelative}

\begin{remark}
The complex manifold $X := \CC^n \setminus A$, where $A$ is a tame discrete subset, does not have the density property: We can assume that $n \geq 2$. Since $A$ is discrete, every holomorphic function and every holomorphic vector field on $X$ extends holomorphically to $\CC^n$ due to the Hartogs' Extension Theorem. Again since $A$ is discrete, complete holomorphic vector fields must extend in $A$ as zero. Since linear combinations and Lie brackets of vector fields vanishing in $A$ will vanish in $A$ too, the Lie algebra generated by complete holomorphic vector fields in $X$ cannot be dense in the Lie algebra of all holomorphic vector fields on $X$.
\end{remark}

On the other hand, holomorphic automorphisms of $\CC^n \setminus A$ that are path-connected to the identity necessarily fix $A$, hence there is hope that they arise as (limit of) composition of flows of complete holomorphic vector fields of $\CC^n$ that vanish in $A$. This leads to the following definition that originally has been stated by Kutzschebauch, Leuenberger and Liendo \cite{MR3320241} and can be found also in \cite{FrancBook}*{Problem 4.11.1}.

\begin{definition}\cite{MR3320241}*{Definition 6.1}
Let $X$ be a complex manifold and let $A \subset X$ be a closed subvariety. We say that $X$ has the \emph{(strong) density property relative to $A$} if the Lie algebra generated by the complete holomorphic vector fields vanishing in $A$ is dense in the Lie algebra of all holomorphic vector fields vanishing in $A$.
\end{definition}

We notice that if $X$ is a Stein manifold, then Cartan's Theorem~B could be applied to the holomorphic functions resp.\ holomorphic vector fields vanishing in $A$ since they form a coherent sheaf. Therefore, this seems to be good notion to prove an Anders\'en--Lempert Theorem for this setup.

\begin{theorem}
\label{thm-reldens}
Let $A$ be a tame discrete subset of $\CC^n$. Then $\CC^n$ has the density property relative to $A$.
\end{theorem}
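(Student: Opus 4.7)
The plan is to reduce to the standard model $A = \NN \cdot e_1$ and then adapt the classical proof of the density property of $\CC^n$. The relative density property is preserved under conjugation by $\aut(\CC^n)$: a biholomorphism $F$ carries complete holomorphic vector fields vanishing on $A$ to complete holomorphic vector fields vanishing on $F(A)$, induces a Lie algebra isomorphism, and is bicontinuous in the compact-open topology. Since $A$ is tame, I may therefore assume $A = \NN \cdot e_1$. In this model, the module of holomorphic vector fields vanishing on $A$ is $\mathcal{I}(A) \cdot T\CC^n$, and the global sections of the coherent ideal sheaf $\mathcal{I}(A)$ are generated, as an $\hol(\CC^n)$-module, by the coordinates $z_2, \dots, z_n$ together with any fixed entire function $h(z_1)$ with simple zeros precisely on $\NN$ (a suitable Weierstrass product).

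The main task is to produce a sufficiently rich family of complete holomorphic vector fields vanishing on $A$ and to show that the Lie algebra they generate is dense in $\mathcal{I}(A) \cdot T\CC^n$. Natural candidates are shears $f \cdot \partial_{z_j}$ with $f$ independent of $z_j$: for $j = 1$ take $f(z_2, \dots, z_n)$ with $f(0) = 0$; for $j \geq 2$ take $f$ of the form $z_i \cdot g$ with $i \in \{2, \dots, n\} \setminus \{j\}$ (automatic vanishing through $z_i$) or $h(z_1) \cdot g$ with $g$ independent of $z_j$ (vanishing through $h$). Corresponding overshears $f \cdot z_j \partial_{z_j}$ are adjusted analogously. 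From these building blocks I would imitate the classical Andersén--Lempert argument: iterated Lie brackets via the identity $[\Theta, f V] = \Theta(f) \cdot V + f \cdot [\Theta, V]$ (specializing to $\Theta(f) \cdot V$ when $f \in \ker V$ and $[\Theta, V] = 0$) permit one to multiply a given complete shear by arbitrary iterated derivatives of its coefficient. Vanishing on $A$ is preserved at every bracket step, since $[V, W](a) = 0$ whenever $V(a) = W(a) = 0$. Iterating should recover every vector field of the form $P \cdot z_i \partial_{z_j}$ or $P \cdot h(z_1) \partial_{z_j}$ with $P$ a polynomial, and a Runge approximation on the Stein manifold $\CC^n$, applied coefficient-wise using the $\hol$-module decomposition furnished by coherence of $\mathcal{I}(A)$, upgrades this to density in all of $\mathcal{I}(A) \cdot T\CC^n$.

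The hardest part will be the explicit Lie-bracket generation step. In the absolute density-property proof for $\CC^n$, the unconstrained constant vector fields $\partial_{z_1}, \dots, \partial_{z_n}$ serve as natural anchors for iterated brackets, whereas here every vector field in play must vanish on $A$, and each shear coefficient is simultaneously constrained by a kernel condition (for completeness) and a vanishing condition (for relative vanishing on $A$). The way out is to exploit the coordinate decomposition $A \subset \{z_2 = \cdots = z_n = 0\}$: the transverse coordinates $z_2, \dots, z_n$ supply automatic vanishing in directions transverse to $A$, while $h(z_1)$ supplies vanishing along the $z_1$-axis. A representative computation, $[h(z_1) \partial_{z_2}, z_2 \partial_{z_1}] = h(z_1) \partial_{z_1} - z_2 h'(z_1) \partial_{z_2}$, already yields a new vector field $h(z_1) \partial_{z_1}$ (modulo terms that are themselves direct shears) that is not itself expressible as a single shear vanishing on $A$. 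A careful bookkeeping of brackets of these primitive fields should then recover every polynomial coefficient of the $\hol$-module generators of $\mathcal{I}(A)$, after which the Runge step closes the argument.
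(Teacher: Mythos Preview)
Your reduction to a standard model and the identification of the ideal $\mathcal{I}(A)$ by the generators $h(z_1), z_2,\dots,z_n$ are correct and match the paper. At the generation step, however, your route diverges from the paper's and the crucial part is only asserted, not carried out.

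The paper does \emph{not} run a direct Anders\'en--Lempert bookkeeping. It proceeds in two layers. First, with $V=\sin(2\pi z)\,\partial_w$, $U=\sin(2\pi z)\,w\,\partial_w$, $W=w\,\partial_z$ (all complete and vanishing on $A=\ZZ\times\{0\}$), the Kaliman--Kutzschebauch identity
\[
[z^kU,\,w^\ell W]-[z^kV,\,w^{\ell+1}W]=z^k w^\ell\,V(w)\,W=z^k w^\ell\,\sin(2\pi z)\,W
\]
produces the full $\hol(\CC^2)$-module $\hol(\CC^2)\cdot\sin(2\pi z)\,W$, whose elements vanish on $A$ to order $\geq 2$. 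Since $V$ together with a ``tilted'' shear $V'=c((\sqrt5\,z-w)/\sqrt5)(\partial_z+\sqrt5\,\partial_w)$ span the tangent space off $A$, the Kaliman--Kutzschebauch criterion then yields \emph{every} holomorphic field vanishing on $A$ to order $\geq 2$. Second, a separate lemma handles the first-order jets: for an arbitrary $\Xi$ vanishing on $A$ one solves, at each point of $A$, an explicit $4\times4$ linear system to find complete fields $V,Z,V',W'$ (two of them tilted shears with irrational slopes $\sqrt5,\sqrt3$) whose sum $\Theta$ matches $\Xi$ to first order; then $\Xi-\Theta$ falls under the first layer.

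Your proposal collapses these two layers into a single ``iterate brackets until every $P\cdot z_i\,\partial_{z_j}$ and $P\cdot h(z_1)\,\partial_{z_j}$ appears'' step, and that is precisely where the argument is incomplete. In the classical proof the constant fields $\partial_{z_j}$ are the anchors that let brackets strip off one variable at a time; here every anchor already vanishes on $A$, so each bracket produces extra correction terms (as your own sample $[h\,\partial_{z_2},\,z_2\,\partial_{z_1}]=h\,\partial_{z_1}-z_2 h'\,\partial_{z_2}$ shows), and you must simultaneously control the full polynomial coefficient \emph{and} the independent prescription of first-order jets at every point of $A$. The phrase ``iterating should recover'' is the whole proof and is not justified. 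Either carry out that induction in full (it may well succeed, but the bookkeeping is the content), or adopt the paper's cleaner strategy: manufacture one nontrivial $\hol$-submodule via the Kaliman--Kutzschebauch identity, invoke their criterion for the order-$\geq 2$ part, and treat the first-order jets by a separate finite-dimensional interpolation lemma.
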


\begin{theorem}[\cite{MR3320241}*{Theorem 6.3}]
Let $X$ be a Stein manifold and $A \subset X$ be a closed subvariety such that $X$ has the density property relative to $A$.

Let $\Omega \subset X$ be an open subset and let $\varphi_t \colon \Omega \to X$ be a family of holomorphic injections with $\varphi_0 = \id_\Omega$, $\mathcal{C}^1$-smooth in $t \in [0, 1]$, such that every $\varphi_t(\Omega)$ is Runge in $X$ and $\varphi_t|(A \cap \Omega) = \id_{A \cap \Omega}$ for all $t \in [0, 1]$. Then for every compact $K \subset \Omega$ and every $\varepsilon > 0$ there exists a family of holomorphic automorphisms $\Phi_t \colon X \to X$, continuous in $t \in [0, 1]$ such that $\Phi_t|A = \id_A$, $\Phi_0 = \id_X$ and $\left| \varphi_1 - \Phi_1 \right|_K < \varepsilon$.
\end{theorem}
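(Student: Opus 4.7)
The plan is a relative version of the classical Andersén--Lempert construction, arranged so that every object produced in the argument vanishes on $A$.

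First I would convert the family $\varphi_t$ of injections into a time-dependent holomorphic vector field on moving domains. Setting $\Omega_t := \varphi_t(\Omega)$, define $V_t$ on $\Omega_t$ by $V_t(\varphi_t(z)) := \left.\frac{d}{ds}\right|_{s=t}\varphi_s(z)$; the $\mathcal{C}^1$-smoothness in $t$ makes this well-defined and continuous in $t$. Differentiating the identity $\varphi_s|(A\cap\Omega) = \id$ in $s$ shows that $V_t$ vanishes on $A \cap \Omega_t$, so $V_t$ is a holomorphic section on $\Omega_t$ of the coherent sheaf $\mathcal{I}_A \cdot \mathcal{T}_X$ of vector fields vanishing on $A$.

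Next I would globalise and discretise. Because $X$ is Stein, $\Omega_t$ is Runge in $X$, and the sheaf $\mathcal{I}_A \cdot \mathcal{T}_X$ is coherent, Cartan's Theorem~B together with Runge approximation provides a global holomorphic vector field $\tilde V_t$ on $X$ vanishing on $A$ and uniformly close to $V_t$ on $\varphi_t(K)$, with continuous dependence on $t$. Partitioning $[0,1]$ into $N$ equal subintervals, on each piece the non-autonomous flow of $\tilde V_t$ is approximated by the autonomous flow of $\tilde V_{t_j}$ over time $1/N$; taking $N$ large, the concatenation of these autonomous flows approximates $\varphi_1$ on $K$ to any prescribed tolerance.

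The crux of the proof is replacing each short autonomous flow by a composition of flows of complete holomorphic vector fields. By the relative density property, $\tilde V_{t_j}$ lies in the closure of the Lie algebra generated by complete holomorphic vector fields vanishing on $A$. A Trotter/Euler-step argument --- writing iterated commutators as limits of short-time compositions of flows --- then lets one approximate the time-$1/N$ map of $\tilde V_{t_j}$ on a given compact set by a finite composition $\psi_1 \circ \cdots \circ \psi_m$ where each $\psi_i$ is a time-$s_i$ map of a complete holomorphic vector field vanishing on $A$. Concatenating these compositions over all $N$ subintervals, and interpolating linearly in $t$ inside each basic flow, yields a continuous family $\Phi_t$ of automorphisms of $X$ with $\Phi_0 = \id_X$, each fixing $A$ pointwise (because every $\psi_i$ does), and satisfying $|\varphi_1 - \Phi_1|_K < \varepsilon$ by choice of $N$ and of the approximation accuracies.

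The principal difficulty is the standard bookkeeping of Andersén--Lempert: one must keep $K$ inside the domain of every intermediate approximate flow and accumulate only small errors across many composition steps, which is handled via a Stein exhaustion of $X$ by Runge open sets just as in the absolute setting. The essential new input is only that at every approximation step --- the Cartan--B extension and the density-property approximation --- the vanishing condition on $A$ is preserved by construction, so that all intermediate automorphisms, and in particular the final $\Phi_t$, fix $A$ pointwise.
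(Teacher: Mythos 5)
This theorem is quoted in the paper from Kutzschebauch--Leuenberger--Liendo \cite{MR3320241}*{Theorem 6.3} without proof, and your sketch reproduces the standard relative Anders\'en--Lempert argument used there: pass to the time-dependent vector field of the isotopy, note it is a section of the coherent sheaf of vector fields vanishing on $A$, approximate by global sections using the Runge hypothesis, discretise in time, and replace each short-time flow by compositions of flows of complete fields vanishing on $A$ via the relative density property and Euler/Trotter-type formulas. Your proposal is correct and takes essentially the same route as the cited proof.
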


The statements of the following corollaries are well-known, but now arise naturally from the relative density property.

\begin{cor}
Let $A \subset \CC^n$ be a tame discrete subset. Then there exists a Fatou--Bieberbach domain $\Omega \subset \CC^n \setminus A$.
\end{cor}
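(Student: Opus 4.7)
The plan is to produce an automorphism $\Phi \in \aut(\CC^n)$ that fixes $A$ pointwise and has an attracting fixed point outside $A$; its basin of attraction will then serve as the desired Fatou--Bieberbach domain inside $\CC^n \setminus A$.

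After a translation (which preserves both discreteness and tameness) I may assume $0 \notin A$. I would choose a small open ball $B$ around the origin with $\overline{B} \cap A = \emptyset$ and consider the isotopy $\varphi_t \colon B \to \CC^n$ defined by $\varphi_t(z) = (1 - t/2)\, z$ for $t \in [0,1]$. Each $\varphi_t$ is a biholomorphism of $B$ onto a smaller concentric ball, which is convex and hence polynomially convex in $\CC^n$; the condition $\varphi_t|_{A \cap B} = \id$ holds trivially since $A \cap B = \emptyset$. The Andersen--Lempert-type theorem quoted above (applicable thanks to Theorem \ref{thm-reldens}) then provides, for any compact $K \subset B$ and any $\varepsilon > 0$, a continuous family $\Phi_t \in \aut(\CC^n)$ with $\Phi_0 = \id$, $\Phi_t|_A = \id_A$ for every $t$, and $|\Phi_1 - \varphi_1|_K < \varepsilon$.

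Taking $K$ to be a closed ball compactly contained in $B$ and $\varepsilon$ sufficiently small, Cauchy estimates upgrade the $C^0$-closeness of $\Phi := \Phi_1$ to $z \mapsto z/2$ on $K$ into $C^1$-closeness on a smaller concentric ball; in particular $\Phi(0)$ is close to $0$ and $D\Phi(0)$ is close to $\tfrac{1}{2}\, I$. A standard implicit function argument applied to $z \mapsto z - \Phi(z)$ then yields a fixed point $p$ of $\Phi$ near $0$ at which $D\Phi(p)$ still has spectral radius strictly less than one, so that $p$ is attracting. The classical construction (Rosay--Rudin) shows that the basin of attraction $\Omega := \{z \in \CC^n \,:\, \Phi^k(z) \to p\}$ is open and biholomorphic to $\CC^n$.

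It remains to observe that $\Omega$ avoids $A$: since $\Phi|_A = \id_A$, for every $a \in A$ the orbit $\Phi^k(a) = a$ is constant and so cannot converge to $p$, which lies near $0 \notin A$. Hence $\Omega \subset \CC^n \setminus A$ is a proper subset of $\CC^n$ and therefore a genuine Fatou--Bieberbach domain. The whole argument is essentially routine once Theorem \ref{thm-reldens} and the associated Andersen--Lempert theorem relative to $A$ are available; the only mildly delicate step is arranging the attracting fixed point for $\Phi$, handled by the standard Cauchy-estimate plus implicit function theorem combination.
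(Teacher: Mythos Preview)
Your argument is correct and is precisely the standard route the paper has in mind: the paper gives no explicit proof of this corollary, merely remarking that the statement is ``well-known, but now arise[s] naturally from the relative density property.'' Your construction---approximate the contraction $z\mapsto z/2$ on a ball disjoint from $A$ by an automorphism fixing $A$ via Theorem~\ref{thm-reldens} together with the relative Anders\'en--Lempert theorem, obtain an attracting fixed point, and take its basin---is exactly how one makes this precise. One cosmetic point: you can dispense with the initial translation and simply center the ball at any point of $\CC^n\setminus A$; also, the contraction mapping principle (rather than the implicit function theorem) already gives the attracting fixed point directly once $\Phi$ is $C^1$-close to $z\mapsto z/2$ on a closed ball it maps into itself.
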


\begin{cor}
Let $A \subset \CC^n$ be a tame discrete subset. Then $\CC^n \setminus A$ is holomorphically flexible in the sense of Arzhantshev et al.\ and hence an Oka manifold.
\end{cor}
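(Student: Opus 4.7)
The plan is to verify holomorphic flexibility of $\CC^n\setminus A$ by exhibiting, at every point, enough complete holomorphic vector fields to span the tangent space, and then to invoke the standard fact that such flexibility yields a dominating Gromov spray and hence Oka-ness. By tameness I may assume $A=\NN\cdot e_1$ sits inside the $z_1$-axis; since any complete holomorphic vector field on $\CC^n\setminus A$ extends across the discrete set $A$ with vanishing value there (Hartogs extension together with the fact that its flow must fix each point of $A$), it suffices to find the desired fields among the complete holomorphic vector fields on $\CC^n$ that vanish on $A$.

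When some coordinate $p_j$ of $p=(p_1,\ldots,p_n)$ with $j\ge 2$ is nonzero, the fields $z_j\,\partial/\partial z_i$ for $i=1,\ldots,n$ do the job. Each is complete: it is a shear of $\partial/\partial z_i$ by $z_j\in\ker(\partial/\partial z_i)$ when $i\ne j$, and the standard $\CC^\ast$-dilation on the $z_j$-line when $i=j$. Each vanishes on $A$ since $z_j|_A\equiv 0$ for $j\ge 2$, and the values $p_j\,\partial/\partial z_i$ at $p$ span $T_p\CC^n$.

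The main obstacle is the degenerate locus $p=(p_1,0,\ldots,0)$ with $p_1\notin\NN$: every shear with coefficient in $\CC[z_2,\ldots,z_n]$ vanishes at such $p$, and shears or overshears in the $\partial/\partial z_1$ direction can be checked to vanish there as well. I plan to resolve this by conjugation. Using Weierstrass interpolation, pick an entire $f\colon\CC\to\CC$ whose zero set is exactly $\NN$ with simple zeros; then the shear $\Phi(z_1,\ldots,z_n)=(z_1,z_2+f(z_1),z_3,\ldots,z_n)$ is an automorphism of $\CC^n$ fixing $A$ pointwise, and $\Phi(p)=(p_1,f(p_1),0,\ldots,0)$ has nonzero second coordinate, so it falls into the previous case for every $p_1\notin\NN$. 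Pulling back by $\Phi$ the fields produced there yields complete holomorphic vector fields on $\CC^n$ vanishing on $A$ whose values at $p$ span $T_p\CC^n$.

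Having produced a finite family $V_1,\ldots,V_N$ of complete holomorphic vector fields vanishing on $A$ that together span $T_p\CC^n$ at every $p\in\CC^n\setminus A$, the map $s(x,t_1,\ldots,t_N)=\varphi^{t_1}_{V_1}\circ\cdots\circ\varphi^{t_N}_{V_N}(x)$ is a dominating spray on $\CC^n\setminus A$. Gromov's ellipticity theorem together with Forstneri\v{c}'s work then gives that $\CC^n\setminus A$ is Oka, and the spanning statement is precisely holomorphic flexibility in the sense of Arzhantsev et al. In this sketch Theorem \ref{thm-reldens} enters only implicitly, through the concrete abundance of shears vanishing on $A$; a more conceptual proof would apply the relative Anders\'en--Lempert theorem above directly to infinitesimal deformations at $p$ fixing $A$, producing the required spray from $\aut(\CC^n,A)$.
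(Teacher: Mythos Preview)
Your argument is correct. The paper does not give an explicit proof of this corollary; it merely states that the result is well-known and ``arises naturally from the relative density property'' (Theorem~\ref{thm-reldens}). The intended deduction is presumably the standard one: once the Lie algebra generated by complete holomorphic vector fields vanishing on $A$ is dense in all such fields, evaluation at any $p\notin A$ shows that complete fields already span $T_p\CC^n$, yielding flexibility and hence the Oka property.

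Your route is more elementary and self-contained: you bypass Theorem~\ref{thm-reldens} entirely and exhibit an explicit finite family of complete shears---the $z_j\,\partial/\partial z_i$ for $j\ge 2$, together with their conjugates by a single shear automorphism $\Phi$ moving points of the $z_1$-axis off the axis---that span $T_p\CC^n$ at every $p\in\CC^n\setminus A$. This has the advantage of producing a concrete dominating spray with $n^2$ generators, at the cost of not exhibiting the corollary as a consequence of the section's main theorem (which you acknowledge).

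One minor remark on exposition: your parenthetical about complete fields on $\CC^n\setminus A$ extending across $A$ with vanishing value is the converse of what you actually use. The direction you need---that a complete field on $\CC^n$ vanishing on $A$ restricts to a complete field on the complement---is immediate, since each point of $A$ is a zero of the field and hence a fixed point of its flow.
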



Given two vector fields vanishing in $A$ to order $1$, their Lie bracket vanishes in $A$ to order at least $1$. However, in the Kaliman--Kutzschebauch formula which we will need in the proof of Theorem \ref{thm-reldens}, the difference of those two Lie brackets typically vanishes to order $2$ or higher. Hence, we need the next lemma which shows how complete vector fields vanishing in $A$ to order $1$ generate a Lie algebra that allows us to interpolate any term of order $1$ in the points of $A$.

\begin{lemma}
\label{lem-spanning}
Let $(z,w)$ be the coordinates of $\CC^2$ and let $a,b,c,d \colon \CC \to \CC$ be holomorphic functions. Then the following vector fields are complete:
\begin{align*}
 V  &:= a(z) \cdot \frac{\partial}{\partial w} \\
U  &:= b(z) \cdot w \frac{\partial}{\partial w} \\
W  &:= w \frac{\partial}{\partial z} \\
 V' &:= c((\sqrt{5} z - w)/\sqrt{5}) \cdot \left(\frac{\partial}{\partial z} + \sqrt{5} \frac{\partial}{\partial w} \right) \\
W' &:= d((\sqrt{3} z - w)/\sqrt{3}) \cdot \left(\frac{\partial}{\partial z} + \sqrt{3} \frac{\partial}{\partial w} \right)
\end{align*}
Taking $a \equiv 1$, we define
\[
Z := [b(z) \cdot V, W] + b'(z) \cdot U = \left[b(z) \cdot \frac{\partial}{\partial w}, w \frac{\partial}{\partial z}\right] + b'(z) \cdot w \frac{\partial}{\partial w}
 = b(z) \cdot \frac{\partial}{\partial z}
\]
For any holomorphic vector field $\Xi$ on $\CC^2$ that vanishes in $\ZZ \times \{ 0 \}$ we can choose these holomorphic functions $a,b,c,d \colon \CC \to \CC$ such that $V, Z, V',W'$ each vanish in $\ZZ \times \{ 0 \}$ to order $1$ and such that $\Xi - V -Z - V' - W'$ vanishes in $\ZZ \times \{ 0 \}$ to order $2$ or higher. 
\end{lemma}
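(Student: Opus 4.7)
The plan is to reduce the claim to a pointwise linear-algebra statement about $1$-jets at each point of $\ZZ \times \{0\}$ and then invoke holomorphic interpolation on $\CC$ to construct $a,b,c,d$.

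First I would record why the five displayed vector fields are complete. In each case the field is a shear $f \cdot X$ with $X$ a complete linear vector field on $\CC^2$ and $f \in \ker X$: here $X$ runs through $\partial_w$, $w\partial_w$, $\partial_z$, $\partial_z + \sqrt{5}\,\partial_w$, and $\partial_z + \sqrt{3}\,\partial_w$, with $f$ equal to $a(z)$, $b(z)$, $w$, $c((\sqrt{5}z-w)/\sqrt{5})$, and $d((\sqrt{3}z-w)/\sqrt{3})$ respectively; for the last two, a direct computation gives $X(f)=0$. Next I would verify the Lie-bracket identity for $Z$ by the rule $[fX,gY] = fg[X,Y] + f(Xg)Y - g(Yf)X$, which yields $[b(z)\partial_w,\, w\partial_z] = b(z)\partial_z - b'(z)\,w\partial_w$; adding $b'(z)\,U = b'(z)\,w\partial_w$ cancels the unwanted term, so $Z = b(z)\partial_z$ lies in the Lie algebra generated by the listed complete vector fields (note that $b'(z)\,w\partial_w$ is itself of the form of $U$).

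Now fix $n \in \ZZ$ and set $z_n := z - n$. I would compute the $1$-jet of each of $V,Z,V',W'$ at $(n,0)$ in the ordered basis $(z_n\partial_z,\, w\partial_z,\, z_n\partial_w,\, w\partial_w)$ of the space of $1$-jets of vector fields vanishing at $(n,0)$. Since the arguments $(\sqrt{5}z-w)/\sqrt{5}$ and $(\sqrt{3}z-w)/\sqrt{3}$ both equal $n$ at $(n,0)$, vanishing of the four fields at $(n,0)$ is equivalent to $a(n)=b(n)=c(n)=d(n)=0$. Under this assumption, Taylor expansion yields the coefficient rows
\begin{align*}
V  &\leftrightarrow \bigl(0,\; 0,\; a'(n),\; 0\bigr), \\
Z  &\leftrightarrow \bigl(b'(n),\; 0,\; 0,\; 0\bigr), \\
V' &\leftrightarrow \bigl(c'(n),\; -c'(n)/\sqrt{5},\; \sqrt{5}\,c'(n),\; -c'(n)\bigr), \\
W' &\leftrightarrow \bigl(d'(n),\; -d'(n)/\sqrt{3},\; \sqrt{3}\,d'(n),\; -d'(n)\bigr).
\end{align*}
Writing the $1$-jet of $\Xi$ at $(n,0)$ as $(\alpha_n,\beta_n,\gamma_n,\delta_n)$ in the same basis, the second and fourth coordinates yield the $2\times 2$ system
\[
c'(n)+d'(n) = -\delta_n, \qquad c'(n)/\sqrt{5} + d'(n)/\sqrt{3} = -\beta_n,
\]
whose determinant $1/\sqrt{3} - 1/\sqrt{5}$ is non-zero; thus $c'(n)$ and $d'(n)$ are uniquely determined, and the first and third coordinates then uniquely fix $b'(n) = \alpha_n + \delta_n$ and $a'(n) = \gamma_n - \sqrt{5}\,c'(n) - \sqrt{3}\,d'(n)$.

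Having prescribed, for every $n \in \ZZ$, the values $a(n)=b(n)=c(n)=d(n)=0$ together with the derivatives $a'(n),b'(n),c'(n),d'(n)$ computed above, classical holomorphic interpolation on the discrete closed subset $\ZZ \subset \CC$ (a Weierstrass product absorbing the zero condition, combined with a Mittag--Leffler correction for the derivative condition) produces entire functions $a,b,c,d \colon \CC \to \CC$ realising these $2$-jets. By construction, the $1$-jets of $V,Z,V',W'$ at every point of $\ZZ \times \{0\}$ sum to the $1$-jet of $\Xi$ there, so $\Xi - V - Z - V' - W'$ vanishes on $\ZZ \times \{0\}$ to order at least $2$. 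The main obstacle is the pointwise linear-algebra step: we must hit a four-dimensional datum with four scalar unknowns, and the solvability of the decisive $2\times 2$ subsystem rests on the linear independence of the two eigen-directions $\partial_z + \sqrt{5}\,\partial_w$ and $\partial_z + \sqrt{3}\,\partial_w$, i.e.\ on the inequality $\sqrt{5}\neq\sqrt{3}$; this is the reason the two distinct irrational slopes appear in the definitions of $V'$ and $W'$.
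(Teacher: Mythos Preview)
Your proof is correct and follows essentially the same route as the paper's: verify completeness via shears, identify $Z$ as a Lie combination of complete fields, reduce the jet-matching at each $(n,0)$ to a $4\times4$ linear system in the four derivative values $a'(n),b'(n),c'(n),d'(n)$, and then interpolate holomorphically on $\ZZ$. The only difference is presentational: the paper writes the full $4\times4$ matrix and asserts its invertibility, whereas you extract the $2\times2$ block in $(c'(n),d'(n))$ first and back-substitute---which has the minor advantage of making explicit that the nonvanishing determinant is precisely $1/\sqrt{3}-1/\sqrt{5}$.
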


\begin{proof}
The flow of a (constant) linear combination of partial derivatives is a translation, and hence exists for all complex times. If a complete vector field is multiplied by an element from its kernel or its second kernel, then the product is again a complete vector field.

Note that the vector field $Z$ is not complete, but contained in the Lie algebra generated by complete holomorphic vector fields.

We choose the holomorphic functions $a,b,c,d \colon \CC \to \CC$ to vanish precisely in $\ZZ$ to order $1$ and prescribe their derivatives in $j \in \ZZ$ to be equal to $a_j, b_j, c_j, d_j \in \CC$, respectively, as indicated below.

Without loss of generality, consider the point $(0,0)$, i.e.\ $j=0$, and the following power series expansions, assuming the derivatives were all prescribed to be equal to $1$:
\begin{align*}
V &= z \frac{\partial}{\partial w} + \dots \\
Z &= z \frac{\partial}{\partial z} + \dots \\
V' &= (z - w/\sqrt{5}) \cdot \left(\frac{\partial}{\partial z} + \sqrt{5} \frac{\partial}{\partial w}\right) + \dots \\
W' &= (z - w/\sqrt{3}) \cdot \left(\frac{\partial}{\partial z} + \sqrt{3} \frac{\partial}{\partial w}\right) + \dots
\end{align*}
Let the linear part of $\Xi$ in $(j,0)$ be given by $(\alpha z + \beta w) \frac{\partial}{\partial z} + (\gamma z + \delta w) \frac{\partial}{\partial w}$. Then this leads to the following system of linear equations:
\[
\begin{pmatrix}
0 & 1 & 1 & 1 \\
0 & 0 & -1/\sqrt{5} & -1/\sqrt{3} \\
1 & 0 & \sqrt{5} & \sqrt{3} \\
0 & 0 & -1 & -1
\end{pmatrix} \cdot \begin{pmatrix}
a_j \\ b_j \\ c_j \\ d_j
\end{pmatrix} = \begin{pmatrix}
\alpha(j) \\ \beta(j) \\ \gamma(j) \\ \delta(j)
\end{pmatrix}
\]
This matrix is invertible, hence we can find the desired constants $a_j,b_j,c_j,d_j \in \CC$.
\end{proof}

\begin{proof}[Proof of Theorem \ref{thm-reldens}]
For simplicity of notation, we show the statement only for $n=2$.
By $(z,w)$ we denote the coordinates of $\CC^2$. After a global holomorphic change of coordinates we may assume that $A = \ZZ \times \{0\}$. We use the vector fields and notation introduced in Lemma \ref{lem-spanning}. We may choose $b(z) = a(z) = \sin(2 \pi z)$ which vanishes precisely in $\ZZ$ to order $1$. The following vector fields are complete and vanish in $A$:
\begin{align*}
z^k \cdot V  \\
w^\ell \cdot W \\
z^k \cdot U = z^k w \cdot V
\end{align*}
We may now apply the Kaliman--Kutzschebauch formula:
\begin{align*}
[z^k \cdot U, w^\ell \cdot W] - [z^k \cdot V, w^{\ell+1} \cdot W]
 &= z^k \cdot w^\ell \cdot V(w) \cdot W \\
 &= z^k \cdot w^\ell \cdot \sin(2 \pi z) \cdot W
\end{align*}
Since this holds for all $k, \ell \in \NN_0$ we may take linear combinations and limits to obtain the non-trivial $\hol(\CC^2)$-module $\hol(\CC^2) \cdot \sin(2 \pi z) \cdot W$. Note its elements vanish in $A$ to order at least $2$.

The vector fields $V$ and $V'$ simultaneously vanish precisely in $A$, since $\sqrt{5}$ is irrational. The union of their zero sets is a variety of complex co-dimension one, in particular they span the tangent space almost everywhere. Outside of $A$ we can obviously flow along either $V$ or $V'$ to reach a point where the tangent space is spanned by $V$ and $V'$.

We may now apply Theorem 1 of Kaliman and Kutzschebauch \cite{KK-Criteria} and its proof also in the holomorphic setting to obtain every holomorphic vector field on $\CC^n$ that vanishes in $A$ to order $2$.

Given any holomorphic vector field $\Xi$ that vanishes in $A$ to order $1$, we first apply Lemma \ref{lem-spanning} to obtain a vector field $\Xi - \Theta$  that vanishes in $A$ to order $2$ or higher, where $\Theta$ is in the Lie algebra generated by complete vector fields vanishing in $A$.
\end{proof}

\section{Miscellanea}


In \cite{Varolin2000}*{Remark after Corollary~4.9} Varolin claims the following:
\begin{theorem}[False Theorem]
Let $X$ be a Stein manifold with the density property, $A=\{a_j\}_{j \in \NN}$, $B=\{b_j\}_{j \in \NN} \subset X$ closed discrete infinite sets. Then there exists a holomorphic injective map $F:X \to X$ such that $F(a_j)=b_j$ for all $j \in \NN$.
\end{theorem}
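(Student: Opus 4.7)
The plan is to refute Varolin's claim by producing an explicit counterexample. The natural setting is $X = \CC^n$ with $n \geq 2$, which has the density property by Andersén--Lempert. Take $A = \{j \cdot e_1 : j \in \NN\}$, the standard tame set from Definition \ref{def-tame-RR}, and let $B \subset \CC^n$ be a closed discrete infinite set which is non-tame in the sense of Rosay--Rudin; the existence of such $B$ is already established in \cite{RosayRudin}.

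Any holomorphic injection $F \colon \CC^n \to \CC^n$ is automatically biholomorphic onto its image, so that image is either all of $\CC^n$ or a proper Fatou--Bieberbach domain $\Omega \subsetneq \CC^n$. If $F \in \aut(\CC^n)$ and $F(A) = B$, then $B$ is tame directly from Definition \ref{def-tame-RR}, contradicting the choice of $B$. This disposes of the automorphism alternative immediately. The harder scenario is that $F$ is a biholomorphism from $\CC^n$ onto a proper Fatou--Bieberbach domain $\Omega$, because then $F^{-1} \colon \Omega \to \CC^n$ would carry the non-tame set $B \subset \Omega$ biholomorphically onto the tame set $A$ without $F$ itself being an automorphism of $\CC^n$.

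The main obstacle is therefore to exclude this Fatou--Bieberbach alternative. My strategy would be to refine the construction of $B$ with additional rigidity: build $B = \{b_j\}$ inductively, using the freedom to place $b_j$ far away and off-axis, so as to diagonalize at each stage against a candidate biholomorphism $\CC^n \to \Omega$ and block the prescribed assignment $a_j \mapsto b_j$. The technical crux is parametrizing the relevant class of Fatou--Bieberbach biholomorphisms by a countable dense family — this should be attainable via Andersén--Lempert approximation by compositions of shears, combined with a compact exhaustion of the target Fatou--Bieberbach domain, yielding the separability needed for a diagonal argument.

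An alternative and possibly cleaner route would be to move to a different Stein manifold with density property whose structural rigidity obstructs the assignment $a_j \mapsto b_j$ without appealing to an Andersén--Lempert diagonalization — for instance, a homogeneous space in which all holomorphic injections preserve some geometric invariant (orbit type, growth, or a holomorphic foliation) that the chosen bijection between $A$ and $B$ violates. I expect the Fatou--Bieberbach exclusion to be the main technical difficulty in either approach, as it is precisely the gap between \emph{injective holomorphic self-map} and \emph{automorphism} that the false theorem attempts to bridge.
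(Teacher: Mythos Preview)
Your setup and case split are correct, but the argument stalls exactly where you say it does: choosing $B$ merely non-tame does not obstruct the Fatou--Bieberbach alternative. A non-tame set can perfectly well sit inside a proper Fatou--Bieberbach domain and be the biholomorphic image of $A$ under the corresponding embedding; ``non-tame'' only says that no \emph{global} automorphism of $\CC^n$ carries $A$ onto $B$, and tells you nothing about injective self-maps with proper image. The diagonalization you sketch would have to defeat an uncountable family of Fatou--Bieberbach embeddings, and approximation on compacts by shear compositions does not control the discrete values $F(a_j)$ for large $j$, so the separability argument as stated has a real hole.

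The paper closes precisely this gap by strengthening the hypothesis on $B$: take $B$ to be an \emph{unavoidable} set in the sense of Rosay--Rudin \cite{RosayRudin}, i.e.\ a closed discrete set that meets the image of every non-degenerate holomorphic map $\CC^n \to \CC^n$. Since $A$ is tame, $\CC^n \setminus A$ is dominable --- it contains a Fatou--Bieberbach domain --- so there is a non-degenerate holomorphic map $h \colon \CC^n \to \CC^n \setminus A$. If an injective holomorphic $F \colon \CC^n \to \CC^n$ with $F(a_j) = b_j$ existed, injectivity would force $F(\CC^n \setminus A) \subset \CC^n \setminus B$, and then $F \circ h \colon \CC^n \to \CC^n$ would be non-degenerate and avoid $B$, contradicting unavoidability. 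This single observation handles the automorphism case and the Fatou--Bieberbach case simultaneously, with no diagonalization needed. The missing idea in your attempt is exactly the passage from ``non-tame'' to ``unavoidable''.
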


\begin{example}
Let $X=\CC^n$, $n>1$, $A$ a tame set and $B$ an unavoidable set. Suppose there exists $F$ as above, then $F(X \setminus A) \subset X \setminus B$. Since $X \setminus A$ is dominable there exists a non-degenerate holomorphic map $h \colon \CC^n \to X \setminus A$, hence $F \circ h$ is a non-degenerate holomorphic map avoiding $B$ but this is impossible as $B$ was unavoidable. 
\end{example}

\section*{Acknowledgments}

The authors would like to thank Frank Kutzschebauch for helpful remarks and suggestions. The work of the authors was supported in part by BI-DE/18-19-005 and a DAAD grant.

\begin{bibdiv}
\begin{biblist}

%
%
%
%

\bib{AndristKutzschebauch2018}{article}{
   author={Andrist, Rafael B.},
   author={Kutzschebauch, Frank},
   title={The fibred density property and the automorphism group of the
   spectral ball},
   journal={Math. Ann.},
   volume={370},
   date={2018},
   number={1-2},
   pages={917--936},
   issn={0025-5831},
   review={\MR{3747506}},
   doi={10.1007/s00208-017-1520-8},
}

\bib{AndristUgolini2019}{article}{
   author={Andrist, Rafael B.},
   author={Ugolini, Riccardo},
   title={A new notion of tameness},
   journal={J. Math. Anal. Appl.},
   volume={472},
   date={2019},
   number={1},
   pages={196--215},
   issn={0022-247X},
   review={\MR{3906368}},
   doi={10.1016/j.jmaa.2018.11.018},
}

%
%

\bib{Dolgachev}{book}{
   author={Dolgachev, Igor},
   title={Lectures on invariant theory},
   series={London Mathematical Society Lecture Note Series},
   volume={296},
   publisher={Cambridge University Press, Cambridge},
   date={2003},
   pages={xvi+220},
   isbn={0-521-52548-9},
   review={\MR{2004511}},
   doi={10.1017/CBO9780511615436},
}

\bib{DDK2010}{article}{
   author={Donzelli, F.},
   author={Dvorsky, A.},
   author={Kaliman, S.},
   title={Algebraic density property of homogeneous spaces},
   journal={Transform. Groups},
   volume={15},
   date={2010},
   number={3},
   pages={551--576},
   issn={1083-4362},
   review={\MR{2718937}},
   doi={10.1007/s00031-010-9091-8},
}

\bib{ForstnericActions}{article}{
   author={Forstneric, Franc},
   title={Actions of $(\mathbf R,+)$ and $(\mathbf C,+)$ on complex manifolds},
   journal={Math. Z.},
   volume={223},
   date={1996},
   number={1},
   pages={123--153},
   issn={0025-5874},
   review={\MR{1408866}},
}

\bib{Forstneric1999}{article}{
   author={Forstneric, Franc},
   title={Interpolation by holomorphic automorphisms and embeddings in ${\bf
   C}^n$},
   journal={J. Geom. Anal.},
   volume={9},
  date={1999},
   number={1},
   pages={93--117},
   issn={1050-6926},
  review={\MR{1760722}},
}

\bib{FrancBook}{book}{
    author={Forstneri\v{c}, Franc},
    title={Stein manifolds and holomorphic mappings. The homotopy principle in complex analysis. Second Edition},
    publisher={Springer, Berlin},
    year={2017},
    pages={xvii+571},
    isbn={978-3-319-61057-3/hbk},
    isbn={978-3-319-61058-0/ebook},
    doi={10.1007/978-3-319-61058-0},
}

\bib{FrancFrank}{article}{
    author={Forstneri\v{c}, Franc},
    author={Kutzschebauch, Frank},
    title={The first thirty years of Andersen-Lempert theory},
    year={2022},
    eprint={arXiv:2111.08802},
    journal={Analysis Mathematica},
}

%

\bib{MR3700208}{book}{
   author={Freudenburg, Gene},
   title={Algebraic theory of locally nilpotent derivations},
   series={Encyclopaedia of Mathematical Sciences},
   volume={136},
   edition={2},
   note={Invariant Theory and Algebraic Transformation Groups, VII},
   publisher={Springer-Verlag, Berlin},
   date={2017},
   pages={xxii+319},
   isbn={978-3-662-55348-0},
   isbn={978-3-662-55350-3},
   review={\MR{3700208}},
   doi={10.1007/978-3-662-55350-3},
}

\bib{Grosshans}{book}{
   author={Grosshans, Frank D.},
   title={Algebraic homogeneous spaces and invariant theory},
   series={Lecture Notes in Mathematics},
   volume={1673},
   publisher={Springer-Verlag, Berlin},
   date={1997},
   pages={vi+148},
   isbn={3-540-63628-5},
   review={\MR{1489234}},
   doi={10.1007/BFb0093525},
}

\bib{Harris}{book}{
   author={Harris, Joe},
   title={Algebraic geometry},
   series={Graduate Texts in Mathematics},
   volume={133},
   note={A first course},
   publisher={Springer-Verlag, New York},
   date={1992},
   pages={xx+328},
   isbn={0-387-97716-3},
   review={\MR{1182558}},
   doi={10.1007/978-1-4757-2189-8},
}

\bib{KK-Criteria}{article}{
   author={Kaliman, Shulim},
   author={Kutzschebauch, Frank},
   title={Algebraic volume density property of affine algebraic manifolds},
   journal={Invent. Math.},
   volume={181},
   date={2010},
   number={3},
   pages={605--647},
   issn={0020-9910},
   review={\MR{2660454}},
}

\bib{MR3320241}{article}{
   author={Kutzschebauch, Frank},
   author={Leuenberger, Matthias},
   author={Liendo, Alvaro},
   title={The algebraic density property for affine toric varieties},
   journal={J. Pure Appl. Algebra},
   volume={219},
   date={2015},
   number={8},
   pages={3685--3700},
   issn={0022-4049},
   review={\MR{3320241}},
   doi={10.1016/j.jpaa.2014.12.017},
}

%
%
%
%


\bib{Matsushima}{article}{
   author={Matsushima, Yoz\^{o}},
   title={Espaces homog\`enes de Stein des groupes de Lie complexes},
   language={French},
   journal={Nagoya Math. J.},
   volume={16},
   date={1960},
   pages={205--218},
   issn={0027-7630},
   review={\MR{109854}},
}


\bib{RosayRudin}{article}{
   author={Rosay, Jean-Pierre},
   author={Rudin, Walter},
   title={Holomorphic maps from ${\bf C}^n$ to ${\bf C}^n$},
   journal={Trans. Amer. Math. Soc.},
   volume={310},
   date={1988},
   number={1},
   pages={47--86},
   issn={0002-9947},
   review={\MR{929658}},
   doi={10.2307/2001110},
}

\bib {Rosenlicht}{article}{
    AUTHOR = {Rosenlicht, Maxwell},
     TITLE = {Some basic theorems on algebraic groups},
   JOURNAL = {Amer. J. Math.},
    VOLUME = {78},
      YEAR = {1956},
     PAGES = {401--443},
      ISSN = {0002-9327},
 review = {\MR{0082183}},
}

%
%

\bib{Ugolini2017}{article}{
    AUTHOR = {Ugolini, Riccardo},
     TITLE = {A parametric jet-interpolation theorem for holomorphic
              automorphisms of {$\mathbb{C}^n$}},
   JOURNAL = {J. Geom. Anal.},
    VOLUME = {27},
      YEAR = {2017},
    NUMBER = {4},
     PAGES = {2684--2699},
      ISSN = {1050-6926},
       DOI = {10.1007/s12220-017-9778-3},
       URL = {https://doi.org/10.1007/s12220-017-9778-3},
}

\bib{Varolin2000}{article}{
   author={Varolin, Dror},
   title={The density property for complex manifolds and geometric
   structures. II},
   journal={Internat. J. Math.},
   volume={11},
   date={2000},
   number={6},
   pages={837--847},
   issn={0129-167X},
   review={\MR{1785520}},
   doi={10.1142/S0129167X00000404},
}

\bib{Varolin2001}{article}{
   author={Varolin, Dror},
   title={The density property for complex manifolds and geometric
   structures},
   journal={J. Geom. Anal.},
   volume={11},
   date={2001},
   number={1},
   pages={135--160},
   issn={1050-6926},
   review={\MR{1829353}},
   doi={10.1007/BF02921959},
}

%
%
%
%

\bib{Winkelmann2019}{article}{
   author={Winkelmann, J\"{o}rg},
   title={Tame discrete subsets in Stein manifolds},
   journal={J. Aust. Math. Soc.},
   volume={107},
   date={2019},
   number={1},
   pages={110--132},
   issn={1446-7887},
   review={\MR{3978035}},
   doi={10.1017/s1446788718000241},
}

\bib{Winkelmann2020}{article}{
   author={Winkelmann, J\"{o}rg},
   title={Tame Discrete Sets in Algebraic Groups},
   journal={Transformation Groups},
   date={2020},
   doi={10.1007/s00031-020-09608-x},
   note={published online},
}

\end{biblist}
\end{bibdiv}

\end{document}